\newtheorem{thm}{Theorem}[section]
\newtheorem{prop}[thm]{Proposition}
\newtheorem{lem}[thm]{Lemma}
\newtheorem{cor}[thm]{Corollary}
\newtheorem{procedure}[thm]{Procedure}
\theoremstyle{definition}
\newtheorem{remark}[thm]{Remark}
\newcommand{\sH}{{\mathcal H}}
\newcommand{\sM}{{\mathcal M}}
\newcommand{\sU}{{\mathcal U}}
\newcommand{\sX}{{\mathcal X}}
\newcommand{\sN}{{\mathcal N}}
\newcommand{\sY}{{\mathcal Y}}
\def\a{{\alpha}}
\def\b{\beta}
\def\d{\delta}
\def\de{\Delta}
\def\g{\gamma}
\def\ga{\Gamma}
\def\l{\lambda}
\def\la{\Lambda}
\def\va{\varphi}
\def\om{\Omega}
\def\tht{\Theta}
\def\z{\zeta}
\def\ts{\times}
\def\iy{\infty}
\def\im{{\rm Im\, }}
\def\kr{{\rm Ker\, }}
\def\rank{{\rm rank\, }}
\def\col{{\rm col\, }}
\def\lg{\langle}
\def\rg{\rangle}
\newcommand{\BC}{{\mathbb C}}
\newcommand{\BD}{{\mathbb D}}
\newcommand{\BT}{{\mathbb T}}
\newcommand{\fR}{\mathfrak{R}}
\newcommand{\ands}{\quad\mbox{and}\quad}
\newcommand{\mat}[2]{\ensuremath{\left[\begin{array}{#1}#2\end{array} \right]}}
\newcommand{\inn}[2]{\ensuremath{\langle #1,#2 \rangle}}
\begin{document}

\title[State space formulas for stable rational solutions of a Leech problem]
{State space formulas for stable rational matrix solutions of a Leech problem}

\author[A.E. Frazho]{A.E. Frazho}

\address{%
Department of Aeronautics and Astronautics, Purdue University\\
West Lafayette, IN 47907, USA}

\email{frazho@ecn.purdue.edu}


\author[S. ter Horst]{S. ter Horst}

\address{%
Unit for BMI, North-West University\\
Private Bag X6001-209, Potchefstroom 2520, South Africa}

\email{sanne.terhorst@nwu.ac.za}


\author[M.A. Kaashoek]{M.A. Kaashoek}

\address{%
Department of Mathematics,
VU University Amsterdam\\
De Boelelaan 1081a, 1081 HV Amsterdam, The Netherlands}

\email{m.a.kaashoek@vu.nl}

\thanks{The third author gratefully acknowledge the support of the NWU Mathematics Department at Potchefstroom, SA, during his visit in the Autumn of 2012.}


\subjclass{Primary 47A57; Secondary 47A68, 93B15,  47A56}

\keywords{Leech problem, stable rational matrix functions, state space representations, outer spectral factorization}

\date{}

\begin{abstract}
Given stable rational matrix functions $G$ and $K$, a procedure is presented to compute a stable rational matrix solution $X$ to the Leech problem associated with $G$ and $K$, that is, $G(z)X(z)=K(z)$ and $\sup_{|z|\leq 1}\|X(z)\|\leq 1$. The solution is given in the form of a state space realization, where the matrices involved in this realization are computed from state space realizations of the data functions $G$ and $K$.
\end{abstract}

\maketitle

\setcounter{section}{0}
\setcounter{equation}{0}

\section{Introduction}\label{secintro}

Throughout this paper $G$  and $K$ are stable rational complex-valued matrix functions of sizes $m\ts p$ and $m\ts q$, respectively. Here \emph{stable} means that $G$ and $K$ have no poles in the closed unit disc $|z|\leq 1$. In particular, $G$ and $K$ are  matrix-valued $H^\infty$ functions on the open unit disc $\BD$.  For simplicity we write  $G\in \fR  H_{m\ts p}^\iy$  and $K\in {\fR}H_{m\ts q}^\iy$, where $\fR$ stands for rational.  We say that a $p\ts q$ matrix-valued $H^\iy$ function $X$ is a \emph{contractive analytic solution to} $G X = K$ if
\begin{equation}\label{Leech1}
G(z)X(z)= K(z)    \quad (z\in \BD)  \ands \|X\|_\iy=\sup_{z\in\BD}\|X(z)\|\leq 1.
\end{equation}
Leech's theorem (see \cite[page 107]{RR85} or \cite[Section VIII.6]{FF90}) tells us that there exists an $X\in H_{p\ts q}^\iy $ such that \eqref{Leech1} holds if and only if
\begin{equation}
\label{poscond1}T_GT_G^*-T_KT_K^* \ \mbox{is nonnegative}.
\end{equation}
Here $T_G:\ell^2_+(\BC^p)\to\ell^2_+(\BC^m)$ and $T_K:\ell^2_+(\BC^q)\to\ell^2_+(\BC^m)$ are the  (block) Toeplitz operators defined by $G$ and $K$ respectively. The positivity condition \eqref{poscond1} is also equivalent to the requirement that the map
\begin{equation}\label{poskern1}
L(z,\lambda)=\frac{G(\l)G(z)^*-K(\l)K(z)^*}{1-\l\bar{z}}\quad(z,\l\in\BD)
\end{equation}
is  a positive kernel in the sense of Aronszajn \cite{A50}, that is,  (again see \cite[page 107]{RR85}) that for all finite sequences $z_1,\ldots,z_r\in\BD$  and $x_1,\ldots,x_r\in\BC^m$, where $r$ is an arbitrary positive integer, we have
\begin{equation}\label{poskern2}
\sum_{j, k=1}^r   \frac{  \lg \big(G(z_k)G(z_j)^*-K(z_k)K(z_j)^*\big)x_j, x_k\rg}{(1-\bar{z}_j z_k)}  \geq 0.
\end{equation}

The special case of Leech's theorem with $q=m$ and $K$ identically equal to the $m\ts m$ identity matrix $I_{m}$ is part of the corona theorem, which is due to Carlson \cite{Carl62}, for $m=1$, and Fuhrmann \cite{Fuhr68}, for arbitrary $m$. An algorithm to produce rational solutions to the corona problem with $m=1$ and polynomial data functions is given in \cite{Trent07}. For an engineering perspective on the corona problem and its applications in signal processing see \cite{WBP09,WB12} and the references therein.

When $G$ and $K$ are rational, it is known (see \cite{Trent12} or \cite{tH13}) that condition \eqref{poscond1} is also necessary and sufficient for the existence of   stable rational matrix solutions of \eqref{Leech1}. In the present paper we derive a state space formula for a rational matrix solution whose McMillan degree is at most equal to the McMillan degree of $[G\  K]$ starting from state space realizations for $G$ and $K$. Along the way, we obtain a self contained proof of the existence of a rational matrix solution.

The fact that $G$ and $K$ are stable rational matrix functions implies that the function $\begin{bmatrix}G(z) & K(z) \end{bmatrix}$ is also a stable rational matrix function and hence, as is well-known from mathematical systems theory  (see, e.g., Chapter 1 of \cite{CF03} or  Chapter 4 in \cite{BGKR08}), admits a minimal state space realization of the following form:
\begin{equation}
\label{reprGK1}
\begin{bmatrix}
  G(z)  & K(z)
\end{bmatrix}
 = \begin{bmatrix}
  D_1   & D_2
\end{bmatrix}  + z C(I_n -  z A)^{-1}\begin{bmatrix}
  B_1   & B_2
\end{bmatrix} .
\end{equation}
Here $I_n$ is the $n\ts n$ identity matrix, $A$ is a square matrix of order $n$, and $B_1$, $B_2$, $C$, $D_1$ and $D_2$ are matrices of appropriate sizes.  Moreover, $A$ is a \emph{stable} matrix, that is, $A$ has all its eigenvalues  in the open unit disc $\BD$.  In what follows we denote  by $W_{obs}$  the observability operator defined by the pair $\{C,A\}$,  and  for $j=1,2 $ we denote by $P_j$ the controllability  Gramian of the pair $\{A,B_j\}$, that is
\begin{equation}\label{WobsP12}
W_{obs} =  \begin{bmatrix}
C  \\
C A  \\
C A^2 \\
\vdots
\end{bmatrix}\ands P_j=\sum_{\nu=0}^\iy A^\nu B_j B_j^* (A^*)^\nu\quad (j=1,2).
\end{equation}
Note that $W_{obs}$ is an operator mapping $\BC^n$ into $\ell_+^2(\BC^m) $,  and $P_1$ and $P_2$ are $n\ts n$ matrices that satisfy the Stein equations
\[
P_1=AP_1A^*+B_1B_1^*\quad\mbox{and}\quad P_2=AP_2A^*+B_2B_2^*.
\]
Minimality means there exists no realization as in \eqref{reprGK1} with `state operator' $A$ a matrix of smaller size than the one in the given realization.
Our first main result is the following theorem.

\begin{thm}\label{mainthm1} Let  $G\in {\fR}H^\iy_{m\ts p}$ and $K\in {\fR}H^\iy_{m\ts q}$  be given by the minimal realization \eqref{reprGK1}.
Assume that $T_GT_G^* - T_KT_K^* \geq 0$. Then  there exists a function {$F\in {\fR}H^\iy_{m\ts r}$}, for some $r\leq m$,  of the form
\begin{equation}
\label{introF}
F(z)=D_3+zC(I_n-A)^{-1}B_3,
\end{equation}
such that the following  holds:
\begin{itemize}

\item[\textup{(i)}] $T_GT_G^*-T_KT_K^*-T_FT_F^*= W_{obs}(P_3+P_2-P_1)W_{obs}^*$, where $P_3$ is the controllability  Gramian of the pair $\{A, B_3\}$;

\item[\textup{(ii)}]  $P_3+P_2-P_1$ is nonnegative.
\end{itemize}
In particular,  $T_GT_G^*-T_KT_K^*-T_FT_F^*$ is nonnegative and has rank at most $n$, and
\begin{equation}
\label{introGKF}
G(e^{it})G(e^{it})^*-K(e^{it})K(e^{it})^*-F(e^{it})F(e^{it})^*=0 \quad (t\in[0,2\pi]).
\end{equation}
\end{thm}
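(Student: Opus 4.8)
The plan is to reduce the theorem to a spectral factorization statement on the unit circle and then do the bookkeeping with observability and controllability operators. The starting point is the elementary Toeplitz--Hankel identity: for a stable rational function $W$ with realization $(A,B_W,C,D_W)$ one has $T_{WW^*}=T_WT_W^*+H_WH_W^*$, where $H_W$ is the Hankel operator with symbol $W$, and moreover $H_W=W_{obs}W_{\mathrm{con}}$ with $W_{\mathrm{con}}=[\,B_W\ AB_W\ A^2B_W\ \cdots\,]$, so that $H_WH_W^*=W_{obs}P_WW_{obs}^*$ with $P_W$ the controllability Gramian of $\{A,B_W\}$. Applying this to $G$, $K$ and to the prospective $F$ --- all sharing the same observability pair $\{C,A\}$ --- I obtain
\[
T_GT_G^*-T_KT_K^*-T_FT_F^*=T_{GG^*-KK^*-FF^*}+W_{obs}(P_3+P_2-P_1)W_{obs}^*.
\]
Hence, if I can produce an $F$ of the form \eqref{introF} satisfying the on-circle identity \eqref{introGKF}, the first (Toeplitz) term vanishes and assertion (i) follows at once. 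This is precisely the step in which having $F$ realized with the \emph{same} $C$ and $A$ is essential: it is what lets all three Hankel operators factor through the single operator $W_{obs}$.

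It remains to construct such an $F$. First I would record that the hypothesis forces $\Phi:=GG^*-KK^*\geq0$ on $\BT$: writing $\Theta=[\,G\ K\,]$ and $J=\diag(I_p,-I_q)$, so that $\Phi=\Theta J\Theta^*$, the identity above (with $F$ absent) gives $T_\Phi=(T_GT_G^*-T_KT_K^*)+W_{obs}(P_1-P_2)W_{obs}^*$; compressing with the forward shift $S$ and using $S^{*k}T_\Phi S^k=T_\Phi$ together with $S^{*k}W_{obs}=W_{obs}A^k\to0$ (as $A$ is stable) yields $T_\Phi\geq0$, whence $\Phi\geq0$ pointwise by continuity. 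This gives a self-contained proof of positivity. Next I would compute the positive-real representation $\Phi=R_0+zC(I_n-zA)^{-1}N+[\,\cdots\,]^*$, with $R_0=DJD^*+C\Pi C^*$ and $N=A\Pi C^*+BJD^*$ built from $\Pi:=P_1-P_2$ through its Stein equation, and observe that an $F=D_3+zC(I_n-zA)^{-1}B_3$ satisfies $FF^*=\Phi$ if and only if
\[
\begin{bmatrix}B_3\\ D_3\end{bmatrix}\begin{bmatrix}B_3^* & D_3^*\end{bmatrix}=\begin{bmatrix}P_3-AP_3A^* & N-AP_3C^*\\ N^*-CP_3A^* & R_0-CP_3C^*\end{bmatrix}=:M(P_3)
\]
for some Hermitian $P_3$; here observability of $\{C,A\}$ forces the two representations to coincide and makes $P_3$ the controllability Gramian of $\{A,B_3\}$ automatically. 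Thus $F$ exists precisely when the inequality $M(P_3)\geq0$ is solvable, which is the discrete positive-real (KYP) lemma and holds because $\Phi\geq0$ on $\BT$; taking the maximal solution $P_3$ makes $M(P_3)$ of minimal rank $r\leq m$ and supplies $B_3,D_3$ by factorization. This produces \eqref{introF} and \eqref{introGKF}, hence (i).

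The main obstacle is assertion (ii), the nonnegativity of $P_3+P_2-P_1$. Because $\{C,A\}$ is observable, $W_{obs}$ is injective with $W_{obs}^*W_{obs}>0$, so by (i) the inequality $P_3+P_2-P_1\geq0$ is equivalent to the operator inequality $T_FT_F^*\leq T_GT_G^*-T_KT_K^*$. I expect this to be the hard part, and I would attack it by combining two facts. On the one hand, the maximal-Gramian factor $F$ chosen above has the smallest $T_FT_F^*$ among the spectral factors of $\Phi$ (all of which share the poles of $\Phi$). On the other hand, Leech's theorem, together with the cited rational solvability, provides a \emph{rational} contractive $X$ with $GX=K$, and then $\widetilde F:=G\Upsilon$, with $\Upsilon$ the outer spectral factor of the nonnegative symbol $I-XX^*$, is itself a spectral factor of $\Phi$ for which $[\,\widetilde F\ K\,]=G[\,\Upsilon\ X\,]$ with $[\,\Upsilon\ X\,]$ a contractive multiplier; hence $T_{\widetilde F}T_{\widetilde F}^*+T_KT_K^*=T_{G[\,\Upsilon\ X\,]}T_{G[\,\Upsilon\ X\,]}^*\leq T_GT_G^*$. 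Comparing $F$ with $\widetilde F$ then gives $T_FT_F^*\leq T_{\widetilde F}T_{\widetilde F}^*\leq T_GT_G^*-T_KT_K^*$, which is exactly what is needed.

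Finally, (i) and (ii) together show that $T_GT_G^*-T_KT_K^*-T_FT_F^*=W_{obs}(P_3+P_2-P_1)W_{obs}^*$ is nonnegative, and since $P_3+P_2-P_1$ is an $n\times n$ matrix this operator factors through $\BC^n$ and therefore has rank at most $n$, completing the argument.
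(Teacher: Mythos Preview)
Your reduction of (i) to producing an $F$ of the required form with $FF^*=GG^*-KK^*$ on $\BT$, via the Toeplitz--Hankel identity and the discrete positive-real (KYP) lemma, is correct and is a reasonable alternative to the paper's construction of $F=\Phi^*\tht$ through the outer spectral factor $\Phi$ and a Douglas--Shapiro--Shields factorization. Your limiting argument for $T_R\geq0$ using $S^{*k}T_R S^k=T_R$ and $S^{*k}W_{obs}=W_{obs}A^k\to0$ is also fine.

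The genuine gap is in your proof of (ii). The maximality of $P_3$ in the KYP inequality only guarantees that $T_FT_F^*=T_R-W_{obs}P_3W_{obs}^*$ is minimal among left spectral factors \emph{of the specific form} $D_3+zC(I_n-zA)^{-1}B_3$, i.e., factors sharing the observability pair $\{C,A\}$. Your comparison factor $\widetilde F=G\Upsilon$ has no reason to be of this form: its McMillan degree is a priori $\deg G+\deg\Upsilon$, which may exceed $n$, and left spectral factors of a given $R$ can have arbitrarily large degree (for $R\equiv I_m$ take $\widetilde F$ any finite Blaschke--Potapov product). Consequently the inequality $T_FT_F^*\leq T_{\widetilde F}T_{\widetilde F}^*$ is unjustified, and your chain $T_FT_F^*\leq T_{\widetilde F}T_{\widetilde F}^*\leq T_GT_G^*-T_KT_K^*$ breaks at the first link. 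A secondary issue is that invoking Leech's theorem together with the external rational-solvability result to supply $X$ runs against the paper's stated aim: Theorem~\ref{mainthm1} is precisely the input the paper uses to \emph{construct} a rational contractive solution in Theorem~\ref{mainthm2}, so this route is, within the paper's logic, circular.

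The paper's argument for (ii) avoids any such comparison. Using the specific factor $F=\Phi^*\tht$ one has $T_FT_F^*=T_\Phi^*(I-P_{\sM_\Phi})T_\Phi$. Writing $T_\Phi^*$ in block form relative to $\ell_+^2(\BC^r)=\sM_\Phi\oplus\sM_\Phi^\perp$ and $\ell_+^2(\BC^m)=\sN\oplus\sN^\perp$ with $\sN=\im W_{obs}$ (this is Lemma~\ref{L:declem}), one obtains a factorization
\[
T_GT_G^*-T_KT_K^*=L^*\begin{bmatrix}E_\sN^*W_{obs}(P_3+P_2-P_1)W_{obs}^*E_\sN & 0\\ 0 & I_{\sM_\Phi^\perp}\end{bmatrix}L,
\]
where $L$ has dense range. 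Positivity of the left-hand side is therefore \emph{equivalent} to positivity of the $(1,1)$ block, and since $W_{obs}$ is injective this is in turn equivalent to $P_3+P_2-P_1\geq0$. No appeal to Leech's theorem is needed.
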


We see the above theorem as the state space version of the rational matrix analogue of Theorem 0.1 in  \cite{tH13}. Furthermore, to construct the function $F$ in \eqref{introF}  we follow  the method of proof  given in Section 2 of \cite{tH13}, specifying each step in an appropriate state space setting, and using the fact {that}
\begin{equation}
\label{imHankel}
\im H_G+  \im H_K =\im  \begin{bmatrix} H_G& H_K  \end{bmatrix} =\im W_{obs},
\end{equation}
where $H_G$ and $H_K$ are the Hankel operators defined by  $G$ and $K$, respectively.   In the construction of    $F$   an important role is played   by the rational $m\ts m$ matrix  function $R$ defined by
\begin{equation}
\label{defRintro}
R(z)= G(z)G(\bar{z}^{-1})^*-K(z)K(\bar{z}^{-1})^*.
\end{equation}
Using \eqref{poskern2} one sees that   the  positivity condition \eqref{poscond1} implies that $R$ is nonnegative on the unit circle, and hence $R$ admits an outer spectral factor $\Phi$, that is, $\Phi$ is an outer function in $\fR H^\iy_{r\ts m}$, for some $r\leq m$, such that $R(z)=\Phi(\bar{z}^{-1})^*\Phi(z)$. The construction of $F$ is then done in three steps:
\begin{enumerate}
\item Construct a state space  realization for the outer spectral factor $\Phi$.

\item Put
\[
\sM_\Phi=\{f\in \ell_+^2(\BC^r)\mid T_\Phi^*f\in \im W_{obs}\},
\]
which is a backward shift invariant subspace of $\ell_+^2(\BC^r)$, and construct a state space  realization for the 2-sided inner function $\tht$ determined by  $\kr T_\tht^*= \sM_\Phi$.

\item Put $F=\Phi^*\tht$, and compute a state space  realization for $F$.
\end{enumerate}
The explicit constructions of state space realizations for $\Phi$, $\tht$ and $F$ are given in Section \ref{specfact}.

As soon as Theorem \ref{mainthm1} is   proved  we can use  the ``lurking isometry''  approach to Leech's theorem from  Ball-Trent \cite{BT98} to derive stable rational matrix solutions to  the Leech problem \eqref{Leech1}. The next theorem is our second main result.

\begin{thm}\label{mainthm2} Let  $G\in {\fR}H^\iy_{m\ts p}$ and $K\in {\fR}H^\iy_{m\ts q}$ be given by the minimal realization \eqref{reprGK1}, and let $F\in {\fR}H^\iy_{m\ts r}$ be as in \textup{Theorem \ref{mainthm1}}.
Let $Y$ be the solution of the Stein equation
\[
Y = A^* Y A + C^*C,\quad \mbox{that is,}\quad Y=\sum_{\nu=0}^\iy (A^*)^\nu C^*C A^\nu,
\]
set  $\Upsilon=(P_3+P_2-P_1)^{1/2}$, and let
\begin{equation}
\label{intropartiso1}
U = \begin{bmatrix}
\alpha  &  \beta_1 & \beta_2 \\
\gamma  &  \delta_1 & \delta_2 \\
\end{bmatrix}: \begin{bmatrix} \BC^n  \\  \BC^q \\  \BC^r \end{bmatrix}
\to \begin{bmatrix} \BC^n  \\   \BC^p   \end{bmatrix}
\end{equation}
be defined by
 \begin{align}
U&=  \begin{bmatrix}
 \Upsilon Y \Upsilon & \Upsilon Y B_1  \\
B_1^* Y \Upsilon  & D_1^*D_1 + B_1^* Y B_1
\end{bmatrix} ^+ \ts \nonumber \\
&\hspace{.5cm}\ts
\begin{bmatrix}
\Upsilon Y A \Upsilon  & \Upsilon Y B_2    & \Upsilon Y B_3  \\
D_1^* C \Upsilon +B_1^* Y A \Upsilon  & D_1^*D_2 + B_1^* YB_2 &  D_1^*D_3  + B_1^* Y B_3
 \end{bmatrix}. \label{defU1}
 \end{align}
 Here the  superindex  ${}^+$ means that we take the Moore-Penrose generalized inverse of the matrix involved. Then $U$ is a partial isometry and  the following conditions hold:
\begin{itemize}

\item[\textup{(i)}] the function $X$ defined on $\BD$ by
\begin{equation}\label{introX}
X(z) =  \delta_1 + z  \gamma (I - z \alpha)^{-1} \beta_1
\end{equation}
is a $p\ts q$ stable  contractive  rational matrix   solution to the Leech problem  \eqref{Leech1};

\item[\textup{(ii)}] the function $\Psi$ defined on $\BD$ by
\begin{equation}\label{introPsi}
\Psi(z) =  \delta_2 + z  \gamma (I - z \alpha)^{-1} \beta_2
\end{equation}
is a $p\ts r$ stable rational matrix function,  $\|\Psi\|_\infty \leq 1$, and $\Psi$ satisfies the equation $G(z)\Psi(z)=F(z)$.

\end{itemize}
\end{thm}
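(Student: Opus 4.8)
The plan is to follow the Ball--Trent ``lurking isometry'' scheme of \cite{BT98}, but to organize the whole argument around a single symmetric matrix identity by first recognizing $\sG$ and $\sH$ in \eqref{defU1} as Gramians. Set $M=\diag(Y,I_m)$ and introduce the two maps
\[
\Xi=\begin{bmatrix}\Upsilon & B_1\\ 0 & D_1\end{bmatrix},\qquad
\Theta=\begin{bmatrix}A\Upsilon & B_2 & B_3\\ C\Upsilon & D_2 & D_3\end{bmatrix}.
\]
Using $Y=W_{obs}^*W_{obs}$ together with the Stein equation $Y=A^*YA+C^*C$, a direct block computation gives $\sG=\Xi^*M\Xi$ and $\sH=\Xi^*M\Theta$. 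Since the realization \eqref{reprGK1} is minimal, $\{C,A\}$ is observable and $Y>0$, so $M^{1/2}$ is invertible; writing $\wh\Xi=M^{1/2}\Xi$ and $\wh\Theta=M^{1/2}\Theta$ and invoking the Moore--Penrose identity $(\wh\Xi^*\wh\Xi)^+\wh\Xi^*=\wh\Xi^+$, formula \eqref{defU1} collapses to $U=\wh\Xi^+\wh\Theta$.

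The heart of the matter is the identity $\Xi\Xi^*=\Theta\Theta^*$. I would verify it block by block. The $(1,1)$ entry is the assertion $\Upsilon^2+B_1B_1^*=A\Upsilon^2A^*+B_2B_2^*+B_3B_3^*$, which follows purely from the three Stein equations $P_j=AP_jA^*+B_jB_j^*$ together with $\Upsilon^2=P_3+P_2-P_1$. The $(2,2)$ and $(2,1)$ entries reduce, after matching Fourier coefficients on the unit circle and cancelling the observability factors $CA^{\ell}$, exactly to $D_1D_1^*=D_2D_2^*+D_3D_3^*+C\Upsilon^2C^*$ and $B_1D_1^*=B_2D_2^*+B_3D_3^*+A\Upsilon^2C^*$, and these are precisely the content of the circle equality \eqref{introGKF}. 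Granting $\Xi\Xi^*=\Theta\Theta^*$, everything else is formal: multiplying by $M^{1/2}$ gives $\wh\Xi\wh\Xi^*=\wh\Theta\wh\Theta^*$, whence
\[
U^*U=\wh\Theta^*(\wh\Xi\wh\Xi^*)^+\wh\Theta=\wh\Theta^*(\wh\Theta\wh\Theta^*)^+\wh\Theta=\wh\Theta^+\wh\Theta
\]
is an orthogonal projection, so $U$ is a partial isometry and in particular a contraction.

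Next I would extract the interpolation conditions. Because $\wh\Xi\wh\Xi^*=\wh\Theta\wh\Theta^*$ forces $\ran\wh\Theta=\ran\wh\Xi$, the relation $\wh\Xi U=\wh\Xi\wh\Xi^+\wh\Theta=\wh\Theta$ holds, and cancelling the invertible $M^{1/2}$ yields $\Xi U=\Theta$. Written out, this is the system of intertwining relations $\Upsilon\alpha+B_1\gamma=A\Upsilon$, $D_1\gamma=C\Upsilon$, $\Upsilon\beta_1+B_1\delta_1=B_2$, $D_1\delta_1=D_2$, and the analogous pair with $\beta_2,\delta_2,B_3,D_3$. Substituting these into $G(z)X(z)$ and using the resolvent identity
\[
C(I-zA)^{-1}\Upsilon-C\Upsilon(I-z\alpha)^{-1}=zC(I-zA)^{-1}(A\Upsilon-\Upsilon\alpha)(I-z\alpha)^{-1}
\]
collapses $GX$ to $K$, and in the same way $G\Psi$ to $F$, giving the equations in (i) and (ii).

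Finally, since $U$ is a contraction its state block $\alpha$ satisfies $\|\alpha\|\le1$, so $(I-z\alpha)^{-1}$ is analytic on $\BD$ and $X,\Psi$ are well defined there; the standard fact that the transfer function of a contractive colligation lies in the Schur class then gives $\|X(z)\|\le1$ and $\|\Psi(z)\|\le1$ for all $z\in\BD$, i.e. $\|X\|_\infty\le1$ and $\|\Psi\|_\infty\le1$. As $X$ and $\Psi$ are rational and bounded on the open disc, they can have no poles in $|z|\le1$, hence are stable, which completes (i) and (ii). I expect the main obstacle to be the verification of $\Xi\Xi^*=\Theta\Theta^*$: the $(1,1)$ block is a routine Stein-equation manipulation, but the lower blocks require translating the pointwise identity \eqref{introGKF} into the two algebraic relations above, and it is exactly here that minimality (observability of $\{C,A\}$) enters and that care is needed.
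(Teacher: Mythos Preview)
Your argument is correct and follows the same lurking-isometry strategy as the paper, but organizes the central step differently. The paper first passes from item (i) of Theorem~\ref{mainthm1} to the two-variable kernel identity \eqref{fundid1} via reproducing-kernel evaluations (Lemma~\ref{lempos2}), and then invokes a general lemma (Lemma~\ref{lemrep}) to the effect that $M(\l)M(z)^*=N(\l)N(z)^*$ forces $\om_M\om_M^*=\om_N\om_N^*$ on $\ell^2_+(\BC^m)$, whence $U=(\om_M^*\om_M)^+\om_M^*\om_N$ is a partial isometry with $M(z)U=N(z)$; the identities $GX=K$ and $G\Psi=F$ then drop out at the function level by substituting $\la(z)=G(z)\g(I-z\a)^{-1}$, with no resolvent manipulation needed. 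You instead bypass the kernel identity and verify the finite-dimensional relation $\Xi\Xi^*=\Theta\Theta^*$ directly, reading the $(1,1)$ block from the three Stein equations and the lower blocks from the circle identity \eqref{introGKF} together with observability of $\{C,A\}$; from $\Xi U=\Theta$ you then recover $GX=K$ by the resolvent computation. The two routes are equivalent --- your $\wh\Xi,\wh\Theta$ have the same Gramians as the paper's $\om_M,\om_N$ --- but yours stays entirely in finite dimensions and avoids the reproducing-kernel machinery, at the price of a slightly longer endgame where the paper's function-level relation $M(z)U=N(z)$ yields the answer almost immediately.
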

As we shall see, the proof of  the above theorem uses the fact that  item  (i)   in Theorem \ref{mainthm1} yields the  identity:
\begin{align}
 &\l\bar{z}{\la}(\l){\la} (z)^*+G(\l)G(z)^*=\nonumber  \\
&\hspace{1cm}  ={\la}(\l){\la} (z)^*+ K(\l)K(z)^*+F(\l)F(z)^*\quad (z,\l \in\BD),\label{fundid1}
\end{align}
where  $\la(z)=C(I_n-zA)^{-1}(P_3+P_2-P_1)^{1/2}$. This allows one to construct a partial isometry $U$ such that
 \begin{equation}
\label{propUintro}
\begin{bmatrix}  z \la(z) & G(z)  \end{bmatrix}U=\begin{bmatrix}    \la(z) & K(z) &   F(z)   \end{bmatrix}.
\end{equation}
In fact, we will show that the  matrix $U$ defined by \eqref{defU1} has these properties. Using the partitioning \eqref{intropartiso1}, the  identity  \eqref{propUintro} yields the results mentioned in items (i) and (ii)  of Theorem \ref{mainthm2}.

\begin{remark}\label{Rem}
It can happen (cf.,  \cite[Theorem 3.2]{tH13}) that the $m\ts m$ rational matrix function $R$ defined by \eqref{defRintro} is identically equal to zero. For instance, take
\[
G(z) = \frac{1}{\sqrt{2}}\begin{bmatrix}  1  & 1   \end{bmatrix} \quad \mbox{and}\quad K(z) = z.
\]
If $R$ is identically equal to zero, then items (i) and (ii) in Theorem \ref{mainthm1} hold true with the function $F$ identically equal to zero and $P_3=0$. Furthermore, Theorem \ref{mainthm2} holds with $\BC^r$  being replaced by $\BC^0=\{0\}$ and setting $\Upsilon=(P_2-P_1)^{1/2}$. See Theorem \ref{thabs} below for further details.
\end{remark}

\begin{remark}\label{Rem2} If the rational matrix function $R$ defined by \eqref{defRintro} is not identically equal to zero, Theorem \ref{mainthm1} tells us that one can reduce the problem to the case where $R$ is identically equal to zero without increasing the complexity of the problem. More precisely, Theorem \ref{mainthm1} shows that there exists   $F\in {\fR}H^\iy_{m\ts r}$ of the form \eqref{introF} such that
condition \eqref{poscond1} holds with $\begin{bmatrix}K&F \end{bmatrix}$ in place of $K$,  the realization
  \[
  \begin{bmatrix}
  G(z)  & K(z)&F(z)
\end{bmatrix}
 = \begin{bmatrix}
  D_1   & D_2  & D_3
\end{bmatrix}  + z C(I_n -  z A)^{-1}\begin{bmatrix}
  B_1   & B_2  & B_3
\end{bmatrix}
\]
is minimal, and  the rational matrix function defined  by  \eqref{defRintro}   with $\begin{bmatrix}K&F \end{bmatrix}$ in place of $K$ is identically equal to zero.
\end{remark}

The paper consists of six sections including the present introduction. In the   second section  we construct the function $F$ following  the
 three steps listed above. This is done in a somewhat more general setting, not  using $G$ and $K$, but only  an $m\ts m$ rational matrix function $R$ which  has no pole on the unit circle $\BT$ and whose values on $\BT$ are nonnegative.  In Section \ref{secProof1} we prove Theorem \ref{mainthm1}.  The proof of Theorem \ref{mainthm2} is given in  Section \ref{secProof2}. In Section \ref{secposdef} we specify the results for the case when on the unit circle the values of the function $R$ defined by \eqref{defRintro} are strictly positive.   In the final section we illustrate the main theorems on an example.

\medskip
\noindent\textbf{Some terminology and notation.} We conclude this introduction with some terminology and notation that will be used throughout the paper.   Given a subspace $\sU$ of a Hilbert space $\sY$ we denote by $E_\sU$ the canonical embedding of $\sU$ into $\sY$. Note that $E_\sU^*$ is the orthogonal projection of $\sY$ onto $\sU$ viewed as an operator from $\sY$ to $\sU$.  Thus the orthogonal projection of $\sY$ onto $\sU$ viewed as an operator on $\sY$  is given by $E_\sU E_\sU^*$.  The latter operator will also be denoted by $P_\sU$. For any positive integer $k$ we write $E$ for the canonical embedding of $\BC^k$ onto the first coordinate space of $\ell_+^2(\BC^k)$, that is, $E^*=  \begin{bmatrix} I_k & 0  & 0   &\cdots  \\  \end{bmatrix}$.  Here $\ell^2_+(\BC^k)$ denotes the Hilbert space of unilateral square summable sequences of vectors in $\BC^k$.

Let $T$ be a bounded linear operator from the Hilbert space $\sU$ into the Hilbert space $\sY$, and assume that $T$ has a closed range. Then $T^+$ denotes the \emph{Moore-Penrose generalized inverse} of $T$, that is, $T^+$ is the unique operator from $\sY$ into $\sU$ such that $T^+T=P_{\im T^*}$ and $T T^+=P_{\im T}$. If $T$ is a Hilbert space operator on $\sU$, i.e., from $\sU$ into $\sU$, then $T$ is called nonnegative in case $\inn{Tu}{u}\geq 0$ for all $u\in\sU$, and strictly positive if $T$ is nonnegative and invertible. We will use the notation $T\geq 0$ to indicate that $T$ is nonnegative.

For a rational matrix function  $\om$ we define $\om^*(z)=\om(\bar{z}^{-1})^*$. If  $\om$ has no poles on the unit circle $\BT$, then $\om^*(\z)=\om(\z)^*$ for any $\z\in \BT$.  If $\om$ is a $k\ts l$   matrix function with entries in $L^\iy$ on the unit circle $\BT$, i.e., $\om$ is measurable and essentially bounded on $\BT$, then
$T_\om$ is the {\em Toeplitz operator} defined by
 \begin{equation}\label{defTF}
T_\om = \left[ \begin{array}{cccc}
      \om_0     & \om_{-1}     & \om_{-2}    & \cdots \\
      \om_1     & \om_0        & \om_{-1}    & \cdots \\
      \om_2     & \om_1        & \om_0       & \cdots \\
      \vdots  & \vdots     & \vdots    & \ddots
   \end{array}\right]:\ell_+^2(\BC^l) \to \ell_+^2(\BC^k).
\end{equation}
Here $\ldots, \om_{-1}, \om_0,  \om_1, \ldots$ are  the (block) Fourier coefficients of  $\om$. The function  $\om$ is   in $H_{k\ts l}^\infty$ if and only if $T_\om$ is a (block) lower triangular Toeplitz matrix. By $H_\om$ we denote the block Hankel operator determined by the block Fourier coefficients $\om_1,  \om_2, \ldots$, that is,
\begin{equation}\label{defHF}
H_\om = \left[ \begin{array}{cccc}
      \om_1    & \Omega_2    & \om_3    & \cdots \\
      \om_2     & \om_3        & \om_4    & \cdots \\
      \om_3     & \om_4        & \om_5       & \cdots \\
      \vdots  & \vdots     & \vdots    & \ddots
   \end{array}\right]:\ell_+^2(\BC^l) \to \ell_+^2(\BC^k) .
\end{equation}

Now assume $\om\in \fR H^\infty_{k\ts l}$. In that case, $\om$ admits a
state space realization of the form
\begin{equation}\label{Omreal}
\om(z)=D+zC(I_n-zA)^{-1}B,
\end{equation}
with $A$ a stable $n\ts n$ matrix, and $B$, $C$ and $D$ matrices of appropriate size. The integer $n$ is referred to as the state dimension. The observability operator $W_{obs}$ and controllability operator $W_{con}$ defined by the pairs $\{C,A\}$ and $\{A,B\}$, respectively, are defined by
\[
W_{obs}=\mat{c}{C\\CA  \\C A^2  \\\vdots} :\BC^n\to\ell^2_+(\BC^k),\ \
W_{con}=\mat{c}{B^*\\ B^* A^* \\ B^*  A^{*2}\\\vdots}^*:\ell^2_+(\BC^l)\to\BC^n.
\]
Moreover, the observability Gramian $P$ and controllability Gramian $Q$ are the $n \ts n$ matrices given by
\begin{align*}
P&=W_{obs}^* W_{obs}=\sum_{\nu=0}^\infty A^{*\nu} C^*C A^{\nu},\\
Q&=W_{con} W_{con}^*=\sum_{\nu=0}^\infty A^{\nu} B B^* A^{*\nu}.
\end{align*}
The pair $\{C,A\}$ (or the realization \eqref{Omreal}) is called observable in case $P$ is strictly positive, or equivalently, if $\kr W_{obs}=\{0\}$, and the pair $\{A,B\}$ (or the realization \eqref{Omreal}) is called controllable in case $Q$ is strictly positive, or equivalently, if $\im W_{obs}=\BC^k$. It is well known that the realization \eqref{Omreal} is minimal, i.e., there is no state space realization of $\om$ with smaller state dimension, if and only if the realization \eqref{Omreal} is observable and controllable. Finally, note that, given the realization \eqref{Omreal}, we have $H_\om=W_{obs} W_{con}$, and hence $H_\om H_\om^*=W_{obs} Q W_{obs}^*$.

\setcounter{equation}{0}
\section{State space formulas for the outer spectral factor and related functions}\label{specfact}

In this section $R$ is a non-zero  $m\ts m$ rational matrix function with no pole on the unit circle $\BT$. We assume  that $R(\z)$ is hermitian for each $\z\in \BT$, and hence  $R$ admits a state space realization of the following form:
\begin{equation}\label{defR}
R(z) = zC(I_n - zA)^{-1}\ga +
R_0 + \ga^*(zI_n - A^*)^{-1}C^*.
\end{equation}
Here $I_n$ is the $n\ts n$ identity matrix, and $A$ is a stable $n\ts n$ matrix, i.e., all the eigenvalues of ~$A$ are in the open unit disc $\BD$. In the sequel $W_{obs}$ denotes the observability operator defined by the pair $\{C,A\}$, that is,  $W_{obs}$ is the map from $\BC^n$ into $\ell^2_+(\BC^m)$ given by the first identity in \eqref{WobsP12}.

Throughout this section we shall assume that $R(\z)$ is a nonnegative matrix for each  $\z\in \BT$. At this level of generality we shall carry out the three steps of the procedure outlined in the introduction, leading to the construction of a function $F$ with the properties stated in Theorem \ref{mainthm1}.

\paragraph{Step 1: The outer spectral factor $\Phi$.}
The assumption that $R(\z)\geq 0$ on $\BT$ implies (see \cite[Section 6.8]{RR85}) that the Toeplitz  operator $T_R$ is a nonnegative operator and  $R$ admits an \emph{outer spectral factor} $\Phi$, that is, $\Phi$ is in ${\fR}H_{r\ts m}^\infty$, for some $r\leq m$,  such that
\begin{equation}\label{outs}
  R(z) = \Phi^*(z) \Phi(z)
\end{equation}
and the range of the Toeplitz operator $T_\Phi$ is  a dense set in $\ell_+^2(\BC^r)$. Recall (see the final paragraph of Section \ref{secintro})   that $\om^*(z)=\om(\bar{z}^{-1})^*$ for any rational matrix  function  $\om$. The outer spectral factor $\Phi$ is unique up to a unitary constant operator on the left, that is, if $\Psi$ is another outer function satisfying  $R(z) = \Psi^*(z) \Psi(z)$, then $\Phi(z) = U\Psi(z)$ where $U$ is a constant unitary operator; see \cite{NFBK, FB10} for further details.

The following theorem shows how a state space realization of $\Phi$ can be constructed from the state space realization of $R$. It does not require the pair $\{C,A\}$ to be observable.

\begin{thm}\label{thmR1} Let $R$ be as in \eqref{defR}. Assume that $R(\z)\geq 0$  for each $\z\in \BT$, and let $\Phi \in {\fR}H_{r\ts m}^\infty$ be an outer spectral factor of $R$. Put
\begin{equation}\label{XPhi}
\sX_\Phi=\{x\in \BC^n\mid W_{obs} x\in \im T_\Phi^*\}.
\end{equation}
Then $\sX_\Phi$ is invariant under $A$, the space $\bigvee_{\nu=0}^\iy  A^\nu\ga\BC^m$ is contained in $\sX_\Phi$, and there exists a $r\ts n$ matrix $C_\Phi$ such that
\begin{align}
&\Phi(z)= \Phi(0)+ zC_\Phi(I_n - zA)^{-1}\ga;\label{eqPhi}\\[.3cm]
&W_{obs}x=T_\Phi^* W_{\Phi,\,obs}x, \quad x\in \sX_\Phi. \label{basicid}
\end{align}
Here  $W_{\Phi,\,obs}=\mat{cccc}{C_\Phi^*&A^*C_\Phi^*&A^{*2}C_\Phi^*&\cdots}^*$ which is  the observability operator defined by the pair $\{C_\Phi,A\}$. Moreover, $C_\Phi|_{\sX_\Phi}$ is uniquely determined by \eqref{basicid}. Furthermore, defining  $Q_\Phi$ to be the observability Gramian of the pair $\{C_\Phi, A\}$, that is, $Q_\Phi=\sum_{\nu=0}^\iy (A^\nu)^*C_\Phi^*C_\Phi A^\nu$, we have
\begin{align}
 \Phi(0)^*C_\Phi x&=Cx-\ga^* Q_\Phi  A x\quad (x\in \sX_\Phi), \label{cstar}\\[.3cm]
  \Phi(0)^*\Phi(0)&=R_0-\ga^*Q_\Phi  \ga.\label{r0ss}
\end{align}
In particular, $R_0-\ga^*Q_\Phi  \ga$ is nonnegative.
\end{thm}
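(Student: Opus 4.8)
\textbf{Proof strategy for Theorem \ref{thmR1}.}

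The plan is to exploit the fundamental structural identity relating $R$, $T_R$, and the realization \eqref{defR}. Since $R(z)=\Phi^*(z)\Phi(z)$ by \eqref{outs}, the Toeplitz operator factors as $T_R=T_\Phi^*T_\Phi$, so that $\im T_R \subseteq \im T_\Phi^*$ and, by density of $\ran T_\Phi$, the closure of $\im T_\Phi^*$ equals $\im T_R$ up to the usual range considerations. First I would compute $T_R$ directly from \eqref{defR}: the lower-triangular part comes from $zC(I_n-zA)^{-1}\ga$ and the constant term $R_0$, while the upper-triangular part comes from $\ga^*(zI_n-A^*)^{-1}C^*$. A standard computation shows that the strictly-lower-triangular block structure of $T_R$ is governed by the Hankel-type factor $W_{obs}$ on one side, so that $\im T_R$ relates to $\im W_{obs}\Gamma$ for an appropriate operator $\Gamma$ built from $\ga$. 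This is what lets me connect $\im T_\Phi^*$ to $W_{obs}x$ and thereby make sense of the defining set $\sX_\Phi$ in \eqref{XPhi}.

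Next I would establish the three assertions about $\sX_\Phi$. \emph{A-invariance}: if $W_{obs}x\in\im T_\Phi^*$, I need $W_{obs}Ax\in\im T_\Phi^*$; the key is the shift relation $SW_{obs}=W_{obs}A$ (where $S$ is the forward shift), combined with the fact that $\im T_\Phi^*$ is invariant under the backward shift $S^*$ because $T_\Phi$ is lower-triangular (so $T_\Phi S = ST_\Phi$, giving $S^*T_\Phi^*=T_\Phi^*S^*$, hence $S^*\im T_\Phi^*\subseteq \im T_\Phi^*$). Applying $S^*$ to $W_{obs}x = SW_{obs}x - (\text{first coordinate})$ should yield $W_{obs}Ax\in\im T_\Phi^*$. \emph{Containment of $\bigvee A^\nu\ga\BC^m$}: here I would use the lower-triangular block structure of $T_R=T_\Phi^*T_\Phi$ together with the realization to check that $W_{obs}\ga y \in \im T_\Phi^*$ for each $y\in\BC^m$, then invoke $A$-invariance to get all $A^\nu\ga\BC^m$. \emph{Existence of $C_\Phi$ and the realization \eqref{eqPhi}}: having $\bigvee A^\nu\ga\BC^m\subseteq\sX_\Phi$ means the reachable subspace for $\{A,\ga\}$ lies in $\sX_\Phi$, so the controllable part of $\Phi$ is realized using only states in $\sX_\Phi$; the matrix $C_\Phi$ is then read off by matching the state-space realization of $\Phi$ with the reachability structure, and its restriction to $\sX_\Phi$ is pinned down by the identity \eqref{basicid}.

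For the uniqueness of $C_\Phi|_{\sX_\Phi}$ and identity \eqref{basicid}, I would argue that \eqref{basicid} expresses $W_{obs}$ on $\sX_\Phi$ as $T_\Phi^*$ applied to the observability operator $W_{\Phi,obs}$; since $\ran T_\Phi$ is dense, $T_\Phi^*$ is injective, so $W_{\Phi,obs}x$, and hence $C_\Phi A^\nu x$ for all $\nu$, is determined by $W_{obs}x$ — giving uniqueness on $\sX_\Phi$ (using $A$-invariance so all $A^\nu x$ stay in $\sX_\Phi$). Finally, for \eqref{cstar} and \eqref{r0ss} I would substitute the realization \eqref{eqPhi} into $R(z)=\Phi^*(z)\Phi(z)$ and compare state-space realizations of both sides. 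The right-hand side $\Phi^*(z)\Phi(z)$ expands, via the product formula for realizations and the Lyapunov/Stein equation defining $Q_\Phi$, into a realization whose feedthrough term is $\Phi(0)^*\Phi(0)$ and whose analytic part has output matrix $\Phi(0)^*C_\Phi + \ga^*Q_\Phi A$. Matching these against the realization \eqref{defR} of $R$ — constant term $R_0$, output matrix $C$ — yields \eqref{cstar} and \eqref{r0ss}; nonnegativity of $R_0-\ga^*Q_\Phi\ga=\Phi(0)^*\Phi(0)$ is then immediate since it is a Gram matrix.

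The main obstacle I anticipate is the rigorous bookkeeping in connecting the abstract range condition $W_{obs}x\in\im T_\Phi^*$ to the concrete state-space data, particularly proving the containment $\bigvee A^\nu\ga\BC^m\subseteq\sX_\Phi$ and extracting $C_\Phi$ correctly when the pair $\{C,A\}$ is \emph{not} assumed observable. Without observability, $W_{obs}$ may have a nontrivial kernel, so I must be careful that $C_\Phi$ is only well-defined on $\sX_\Phi$ and that the realization \eqref{eqPhi} captures precisely the controllable part driven by $\ga$; the interplay between the (possibly degenerate) observability structure of $R$ and the genuinely outer, hence range-dense, factor $\Phi$ is the delicate point.
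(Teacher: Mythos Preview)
Your overall architecture matches the paper's, but there is a genuine gap in how you propose to construct $C_\Phi$ and prove \eqref{cstar}. Both issues have the same root: the realization \eqref{eqPhi} only constrains $C_\Phi$ on the reachable subspace $\bigvee_{\nu\geq 0}A^\nu\Gamma\,\BC^m$, which may be \emph{strictly smaller} than $\sX_\Phi$. Your plan to ``read off $C_\Phi$ by matching the state-space realization of $\Phi$'' therefore leaves $C_\Phi$ undefined on $\sX_\Phi\setminus\bigvee A^\nu\Gamma\,\BC^m$, and your derivation of \eqref{cstar} by equating the analytic output matrices of the two realizations of $R$ only yields $Cx=\Phi(0)^*C_\Phi x+\Gamma^*Q_\Phi Ax$ for $x$ in the reachable subspace, not for all $x\in\sX_\Phi$ as the theorem claims.

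The paper fills this gap by reversing your order of construction. Since $\Phi$ is outer, $T_\Phi$ has dense range, so $T_\Phi^*$ is injective; hence for each $x\in\sX_\Phi$ there is a \emph{unique} $Wx\in\ell^2_+(\BC^r)$ with $T_\Phi^*Wx=W_{obs}x$. This defines a linear map $W$ on all of $\sX_\Phi$ from the outset. The shift intertwining $S_r^*W=WA|_{\sX_\Phi}$ (from $T_\Phi^*S_r^*=S_m^*T_\Phi^*$ and $S_m^*W_{obs}=W_{obs}A$) then forces $W$ to have observability form, and one sets $C_\Phi x=E^*Wx$ for $x\in\sX_\Phi$. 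With $C_\Phi$ so defined, \eqref{basicid} holds on $\sX_\Phi$ by construction, \eqref{eqPhi} follows by specializing to $x=\Gamma u$, and \eqref{cstar} is obtained for every $x\in\sX_\Phi$ by reading the first block row of $T_\Phi^*W_{\Phi,obs}x=W_{obs}x$ --- no realization matching needed. (Incidentally, your shift relation is backwards: it is $S_m^*W_{obs}=W_{obs}A$, not $S_mW_{obs}=W_{obs}A$; the rest of your $A$-invariance argument is correct once this is fixed.)
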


Although \eqref{basicid} only determines $C_\Phi$ uniquely on $\sX_\Phi$, the fact that $\sX_\Phi$ is invariant under $A$ and $\Gamma \BC^m\subset \sX_\Phi$ implies that we can define $C_\Phi$ on the orthogonal complement of $\sX_\Phi$ arbitrarily, without violating \eqref{eqPhi}--\eqref{r0ss}.

In Section \ref{secposdef} we shall further specify Theorem \ref{thmR1} for the case when the values of $R$ on the unit circle are strictly positive. As we shall see, in that case $C_\Phi$ is uniquely determined, and hence so is the observability Gramian $Q_\Phi$, and $Q_\Phi$ appears as the stabilizing solution of a certain algebraic {Riccati} equation.

\begin{proof}[\bf Proof of Theorem \ref{thmR1}]
 We split the proof into four parts.

\smallskip\noindent \textsc{Part 1.} In this part we show that $\sX_\Phi$ is invariant under $A$ and that $\sX_\Phi$ contains $\bigvee_{\nu=0}^\iy  A^\nu\ga\BC^m$.

Take $x\in \sX_\Phi$. Then there exists a $f\in \ell_+^2(\BC^r)$ such that $W_{obs}x=T_\Phi^*f$.  Let $S_m$ and $S_r$ be the (block) forward shifts on $\ell_+^2(\BC^m)$ and $\ell_+^2(\BC^r)$, respectively. Since $T_\Phi$ is an analytic Toeplitz operator, $S_r T_\Phi=T_\Phi S_m$, and hence
\[
W_{obs}Ax=S_m^*W_{obs}x=S_m^*T_\Phi^*f=T_\Phi^*S_r^*f \in \im T_\Phi^*.
\]
If follows that  $Ax\in \sX_\Phi$, and thus $\sX_\Phi$ is invariant under $A$.

To prove the second statement, given the invariance of  {$\sX_\Phi$}
under $A$, it suffices to show that $\ga$ maps $\BC^n$ into $\sX_\Phi$.
To  {accomplish}  this, let  $R_n$ and $\Phi_n$, $n=0,1, 2, \ldots$,
be the $n$-th Fourier coefficients of $R$ and $\Phi$, respectively. The fact that $R=\Phi^*\Phi$ implies that
\begin{equation}
\label{Fourcoeff}
R_j=\Phi_0^*\Phi_j +\Phi_1^*\Phi_{j+1}+  \Phi_2^*\Phi_{j+2}+ \cdots, \quad j=0,1, 2, \ldots.
\end{equation}
If follows that
\begin{equation}
\label{WobsGa}
W_{obs}\ga u=\begin{bmatrix}
  R_1 \\
  R_2  \\
  R_3\\
    \vdots \\
  \end{bmatrix}u=T_\Phi^*
  \begin{bmatrix}
  \Phi_1 \\
   \Phi_2  \\
 \Phi_3\\
    \vdots \\
  \end{bmatrix}u, \quad u\in \BC^m.
\end{equation}
This proves that $\ga u\in \sX_\Phi$ for each $u\in \BC^m$.

\smallskip\noindent \textsc{Part 2.} In this part  we define $C_\Phi$, derive  \eqref{basicid}, and prove the uniqueness statement.

Take $x\in \sX_\Phi$. Then there exists  {a} $f\in \ell_+^2(\BC^r)$ such that $T_\Phi^*f=W_{obs}x$. Since $T_\Phi^*$ is one-to-one, the vector $f$  is uniquely determined by $x$, and hence there exists a unique linear map $W$ from $\sX_\Phi$ into $\ell_+^2(\BC^r)$ such that
\begin{equation}
\label{defW}
T_\Phi^*Wx=W_{obs}x \qquad (x\in \sX_\Phi).
\end{equation}
We use $W$ to define $C_\Phi$ as follows:
\begin{equation}
\label{defCPhi}
C_\Phi = E^*WE_{\sX_\Phi}^*:\BC^n\to \BC^r.
\end{equation}
Here $E: \BC^r\to \ell_+^2(\BC^r)$ and $E_{\sX_\Phi}:\sX_\Phi\to \BC^n$  are the embedding operators defined in the final paragraph of Section \ref{secintro}.

Next we prove \eqref{basicid}.  Using the canonical embedding of $\sX_\Phi$ into $\BC^n$ we can rewrite  \eqref{defW}   as  $T_\Phi^*W=W_{obs}E_{\sX_\Phi}$.  Recall that  $S_r T_\Phi=T_\Phi S_m$. Thus
\[
T_\Phi^* S_r^* W =  S_m^* T_\Phi^* W =  S_m^* W_{obs}E_{\sX_\Phi} = W_{obs} A E_{\sX_\Phi} = T_\Phi^*  W A E_{\sX_\Phi}.
\]
The fact that  $T_\Phi^*$ is one-to-one implies that $S_r^* W = W A{E_{\sX_\Phi}}$.
Since  $W$ maps $\sX_\Phi$ into $\ell_+^2(\mathbb{C}^r)$, the operator
$W$ admits a matrix representation of the form:
\[
W =  \begin{bmatrix} Y_0^* & Y_1^*  & Y_2^*   &\cdots  \\  \end{bmatrix}^*  :\sX_\Phi \rightarrow \ell_+^2(\mathbb{C}^r) .
\]
Notice that $E^* W = Y_0$, where $E$ is as in \eqref{defCPhi}. Using $S_r^{* j} W = W A^j{E_{\sX_\Phi}}$ for any integer $j \geq 1$, we have
\[
Y_j = E^*  S_\nu^{* j} W = E^*   W A^j{E_{\sX_\Phi}} = Y_0 A^j{E_{\sX_\Phi}}.
\]
Therefore $W$ admits a representation of the form
\[
\begin{bmatrix}Y_0  \\ Y_0 A{E_{\sX_\Phi}} \\ Y_0 A^2{E_{\sX_\Phi}}  \\  \vdots  \\ \end{bmatrix} :\sX_\Phi   \rightarrow \ell_+^2(\mathbb{C}^r).
\]
Thus by \eqref{defCPhi} we have  $C_\Phi = Y_0E_{\sX_\Phi}^*:\BC^n\to \BC^r$.   Using the fact that $\sX_\Phi$ is an invariant subspace for $A$, we see that
\begin{equation}
\label{eqW2}
Wx=W_{\Phi,\, obs}x \quad  (x\in \sX_\Phi).
\end{equation}
Since $T_\Phi^* W=W_{obs}E_{\sX_\Phi}$, the   identity  \eqref{eqW2} yields \eqref{basicid}. Finally, because $W$ is uniquely determined by \eqref{eqW2}, the operator $Y_0 = C_\Phi|_{\sX_\Phi}$ is  uniquely determined as well.

\smallskip\noindent \textsc{Part 3.} In this part  we prove  \eqref{eqPhi}. From the first part of the proof we know that $\im \ga$ is contained in $\sX_\Phi$. Thus the identity \eqref{basicid} yields $T_{\Phi}^*W_{\Phi,\, obs}\ga=W_{obs}\ga$. But then we can use \eqref{WobsGa} to show that
\[
T_{\Phi}^*W_{\Phi,\, obs}\ga=W_{obs}\ga=T_{\Phi}^* \begin{bmatrix}
  \Phi_1 \\
   \Phi_2  \\
 \Phi_3\\
    \vdots \\
  \end{bmatrix}.
\]
Since $T_{\Phi}^*$ is one to one, we see that  $W_{\Phi,\, obs}\ga= \col[\Phi_j]_{j=0}^\iy$. Hence $C_\Phi A^{j-1}\ga=\Phi_j$ for $j=1,2, \dots$. The latter is equivalent to \eqref{eqPhi}.

\smallskip\noindent \textsc{Part 4.} In this part we prove \eqref{cstar} and \eqref{r0ss}.  To establish \eqref{cstar},  note that $Q_\Phi  =  W_{\Phi,\, obs}^* W_{\Phi,\, obs}$. Using \eqref{basicid}, we see that each $x\in \sX_\Phi$  we have
\begin{align*}
Cx &= E^* W_{obs}x = E^*T_\Phi^* W_{\Phi,\, obs}x\\
& = \begin{bmatrix}
\Phi_0^* & \begin{bmatrix}
\Phi_1^* & \Phi_2^* & \Phi_3^* & \cdots \\
\end{bmatrix} \\
\end{bmatrix} \begin{bmatrix}
                C_\Phi  \\
                W_{\Phi,\, obs} A   \\
              \end{bmatrix}x \\
&= \Phi_0^* C_\Phi x + \Gamma^* W_{\Phi,\, obs}^* W_{\Phi,\, obs} Ax \quad [\mbox{because of \eqref{WobsGa}}]\\
&= \Phi_0^* C_\Phi x+ \Gamma^* Q_\Phi  Ax\qquad (x\in \sX_\Phi).
\end{align*}
This proves  \eqref{cstar}. For  $j=0$ the identity \eqref{Fourcoeff} yields
\begin{align*}
R_0 &=  \Phi_0^* \Phi_0 + \sum_{k=1}^\infty \Phi_k^* \Phi_k\\
&= \Phi_0^* \Phi_0 + \sum_{k=1}^\infty \Gamma^* A^{*k}C_\Phi^* C_\Phi A^k \Gamma
= \Phi_0^* \Phi_0 + \Gamma^* Q_\Phi \Gamma.
\end{align*}
Therefore \eqref{r0ss} holds.
\end{proof}

\begin{cor}\label{C:obscon}
Let $R$ be as in \eqref{defR}. Assume $R(\zeta)\geq0$ for each $\zeta\in\BT$, and let $\Phi$ be an outer spectral factor of $R$ given by the state space realization \eqref{eqPhi}. Define $\sX_\Phi$ as in \eqref{XPhi}. Set $A_\circ=P_{\sX_\Phi}A|_{\sX_\Phi}$ on $\sX_\Phi$ and $C_\circ=C_\Phi|_{\sX_\Phi}$, and let  $W_{\circ,\, obs}$ denote  the observability operator defined by $\{C_\circ,A_\circ\}$. Then:
\begin{itemize}
\item[\textup{(i)}] If the pair $\{C,A\}$ is observable, then the operator $W_{\Phi,\, obs}|_{\sX_\Phi}$ is one-to-one on $\sX_\Phi$,  {$W_{\Phi,\, obs}|_{\sX_\Phi}=W_{\circ,\, obs}$,} and the pair $\{C_\circ,A_\circ\}$ is observable.

\item[\textup{(ii)}] If the pair $\{A,\Gamma\}$ is controllable, then $\sX_\Phi=\BC^m$.
\end{itemize}
In particular, if $\{C,A\}$ is observable and $\{A,\Gamma\}$ is controllable, then the state space realization \eqref{eqPhi} of $\Phi$ is minimal.
\end{cor}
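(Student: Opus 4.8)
The plan is to reduce all three assertions to injectivity and range properties of observability operators, using identity \eqref{basicid} from Theorem \ref{thmR1} as the main engine together with the $A$-invariance of $\sX_\Phi$. The one structural observation I would record at the outset is that, because $\sX_\Phi$ is invariant under $A$, the compression $A_\circ=P_{\sX_\Phi}A|_{\sX_\Phi}$ is in fact the honest restriction $A|_{\sX_\Phi}$: for $x\in\sX_\Phi$ we have $Ax\in\sX_\Phi$, so $P_{\sX_\Phi}Ax=Ax$, and iterating gives $A_\circ^k x=A^k x$ for all $x\in\sX_\Phi$ and $k\geq0$. Combined with $C_\circ=C_\Phi|_{\sX_\Phi}$, reading off block rows yields the identity $W_{\circ,\,obs}=W_{\Phi,\,obs}|_{\sX_\Phi}$, which I would establish first since it is the second claim of (i) and is reused in the final assertion.

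For the injectivity in part (i), I would take $x\in\sX_\Phi$ with $W_{\Phi,\,obs}x=0$, apply $T_\Phi^*$, and invoke \eqref{basicid} to get $W_{obs}x=T_\Phi^*W_{\Phi,\,obs}x=0$; observability of $\{C,A\}$ makes $W_{obs}$ one-to-one, forcing $x=0$. By the identity recorded above, $W_{\circ,\,obs}=W_{\Phi,\,obs}|_{\sX_\Phi}$ is then injective, which is precisely observability of $\{C_\circ,A_\circ\}$.

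For part (ii), Theorem \ref{thmR1} already gives $\bigvee_{\nu=0}^\iy A^\nu\ga\BC^m\subseteq\sX_\Phi\subseteq\BC^n$. I would then note that controllability of $\{A,\ga\}$ is equivalent to $\bigvee_{\nu=0}^\iy \im A^\nu\ga=\BC^n$, since the orthogonal complement of this span is $\kr Q$ with $Q$ the controllability Gramian, and $Q$ is strictly positive exactly when $\kr Q=\{0\}$. Hence the left-hand span is all of $\BC^n$, and the inclusions collapse. (As $\sX_\Phi\subseteq\BC^n$, the intended conclusion is $\sX_\Phi=\BC^n$.)

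For the final assertion I would use that \eqref{eqPhi} is minimal if and only if $\{C_\Phi,A\}$ is observable and $\{A,\ga\}$ is controllable. Controllability is the hypothesis; for observability, part (ii) gives $\sX_\Phi=\BC^n$, so $W_{\Phi,\,obs}|_{\sX_\Phi}$ is just $W_{\Phi,\,obs}$ on all of $\BC^n$, and part (i) shows it is one-to-one, i.e.\ $\{C_\Phi,A\}$ is observable; hence \eqref{eqPhi} is observable and controllable, so minimal. The only step demanding genuine care — the main, and rather mild, obstacle — is the identification $W_{\circ,\,obs}=W_{\Phi,\,obs}|_{\sX_\Phi}$, which hinges on the compression $A_\circ$ collapsing to the restriction $A|_{\sX_\Phi}$ and must be justified from $A$-invariance before the powers are iterated. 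Everything else is a direct application of \eqref{basicid} and the standard Gramian characterizations of observability and controllability.
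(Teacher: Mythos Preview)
Your proposal is correct and follows essentially the same route as the paper: both arguments hinge on the $A$-invariance of $\sX_\Phi$ to identify $W_{\circ,\,obs}$ with $W_{\Phi,\,obs}|_{\sX_\Phi}$, use \eqref{basicid} together with injectivity of $W_{obs}$ for part (i), and invoke the inclusion $\bigvee_{\nu\geq 0}A^\nu\Gamma\,\BC^m\subset\sX_\Phi$ from Theorem \ref{thmR1} for part (ii). You also correctly caught the typo in the statement of (ii): the conclusion should read $\sX_\Phi=\BC^n$, not $\BC^m$.
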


\begin{proof}[\bf Proof]
We start with claim (i). Assume $\{C,A\}$ is an observable pair. Then $W_{obs}$ is one-to-one. Hence, by \eqref{basicid}, we find that $W_{\Phi,\, obs}|_{\sX_\Phi}$ is one-to-one on $\sX_\Phi$. Since $\sX_\Phi$ is invariant under $A$, it follows that $W_{\Phi,\, obs}|_{\sX_\Phi}=W_{\circ,\, obs}$. Hence $W_{\circ,\, obs}$ is one-to-one, and thus the pair $\{C_\circ,A_\circ\}$ is observable.

Claim (ii) follows directly from the fact that $\{A,\Gamma\}$ being controllable is equivalent to $\bigvee_{\nu=0}^\iy  A^\nu\ga\BC^m=\BC^n$, which by the inclusion $\bigvee_{\nu=0}^\iy  A^\nu\ga\BC^m\subset \sX_\Phi$, derived in Theorem \ref{thmR1}, implies $\sX_\Phi=\BC^n$. In particular, in that case $A_\circ=A$ and $C_\circ=C_\Phi$. Thus if $\{C,A\}$ is observable and $\{A,\Gamma\}$ is controllable, then $\{C_\Phi,A\}$ is observable  as well. Hence the realization \eqref{eqPhi} of $\Phi$ is minimal, as claimed.
\end{proof}

\paragraph{Step 2: The two-sided inner function $\tht$.}
Let  $R$ be given by \eqref{defR}. Assume that $R(\z)\geq 0$ for each $\z\in \BT$, and let $\Phi$ be the outer spectral factor of $R$ defined by \eqref{eqPhi}. We define $\sM_\Phi$ to be the subspace of $\ell^2_+(\BC^r)$ given by
\begin{equation}
\label{defM2}
\sM_\Phi=\{f\in \ell_+^2(\BC^r) \mid T_\Phi^* f\in \im W_{obs}\},
\end{equation}
in line with the definition of $\sM_\Phi$ in the second step of the procedure outlined in the introduction.
Note that $\sM_\Phi$ is invariant under the backward shift $S_r^*$, since for each $f\in\sM_\Phi$ we have
\[
T_\Phi^*S_r^* f=S_m^* T_{\Phi}^*f\in S_m^* \im W_{obs} \subset \im W_{obs},
\]
using the fact that $\im W_{obs}$ is invariant under $S_m^*$. By the Beurling-Lax theorem, there exists a two-sided inner function $\tht\in {\fR}H_{r\ts r}^\iy$ such that $\sM_\Phi=\kr T_\tht^*$. In the sequel we shall refer to $\tht$ as the \emph{inner function determined by} $\sM_\Phi$. Before deriving a state space realization for $\tht$, in Proposition \ref{propR2} below, we first prove an alternative formula for the space $\sM_\Phi$.

\begin{lem}\label{L:MPhi}
Let $W_{\Phi,\, obs}$ be the observability   operator defined by the pair $\{C_\Phi,A\}$. Then the space $\sM_\Phi$ in \eqref{defM2} is also given by
\begin{equation}
\label{defM}
\sM_\Phi=W_{\Phi,\, obs}\sX_\Phi.
\end{equation}
If, in addition, the pair $\{C,A\}$ is observable, then $\sM_\Phi=\im W_{\circ,\, obs}$, where $W_{\circ,\, obs}$ is the observability operator defined by the pair $\{C_\circ,A_\circ\}$ given in \textup{Corollary \ref{C:obscon}}.
\end{lem}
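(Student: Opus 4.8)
The plan is to establish the set identity \eqref{defM} by proving the two inclusions separately, using as the essential tools the intertwining relation \eqref{basicid}, namely $W_{obs}x=T_\Phi^*W_{\Phi,\,obs}x$ for $x\in\sX_\Phi$, together with the injectivity of $T_\Phi^*$ (which holds because $T_\Phi$ has dense range, by the defining property of an outer spectral factor). The second assertion of the lemma will then follow by feeding the first identity into Corollary \ref{C:obscon}.

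For the inclusion $W_{\Phi,\,obs}\sX_\Phi\subseteq\sM_\Phi$, I would take $x\in\sX_\Phi$ and simply apply \eqref{basicid} to get $T_\Phi^*(W_{\Phi,\,obs}x)=W_{obs}x\in\im W_{obs}$; comparing with the defining condition \eqref{defM2} shows at once that $W_{\Phi,\,obs}x\in\sM_\Phi$. For the reverse inclusion $\sM_\Phi\subseteq W_{\Phi,\,obs}\sX_\Phi$, I would start from $f\in\sM_\Phi$, so that $T_\Phi^*f=W_{obs}x$ for some $x\in\BC^n$. The key observation is that this relation already exhibits $W_{obs}x\in\im T_\Phi^*$, whence $x\in\sX_\Phi$ by the definition \eqref{XPhi}. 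With $x$ now known to lie in $\sX_\Phi$, the relation \eqref{basicid} applies and gives $T_\Phi^*f=W_{obs}x=T_\Phi^*W_{\Phi,\,obs}x$; since $T_\Phi^*$ is one-to-one, we conclude $f=W_{\Phi,\,obs}x\in W_{\Phi,\,obs}\sX_\Phi$.

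For the second assertion, assume $\{C,A\}$ is observable. By Corollary \ref{C:obscon}(i) we then have $W_{\Phi,\,obs}|_{\sX_\Phi}=W_{\circ,\,obs}$, so applying this to the identity $\sM_\Phi=W_{\Phi,\,obs}\sX_\Phi$ just proved yields $\sM_\Phi=W_{\circ,\,obs}\sX_\Phi=\im W_{\circ,\,obs}$, as required. The whole argument is short, and the only point that needs care is the reverse inclusion above: one must notice that the witness $x$ produced from $T_\Phi^*f\in\im W_{obs}$ is forced into $\sX_\Phi$---rather than remaining an arbitrary vector of $\BC^n$---which is exactly what makes \eqref{basicid} available. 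I would also note that $x$ need not be unique when $\{C,A\}$ fails to be observable (so that $W_{obs}$ may have a nontrivial kernel), but only the existence of one such witness in $\sX_\Phi$ is needed for the inclusion.
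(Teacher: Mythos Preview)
Your proof is correct and follows essentially the same route as the paper's: both inclusions are established via \eqref{basicid} together with the injectivity of $T_\Phi^*$, and the second assertion is reduced to Corollary~\ref{C:obscon}(i). The only cosmetic difference is that the paper cites the intermediate identities \eqref{defW} and \eqref{eqW2} from inside the proof of Theorem~\ref{thmR1}, whereas you invoke \eqref{basicid} directly; your phrasing is slightly cleaner for that reason.
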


\begin{proof}[\bf Proof]
The identity $T_\Phi^* W_{\Phi,obs}x = W_{obs} x$ for $x$ in $\sX_\Phi$  {in \eqref{basicid}} implies that $\sM_\Phi \supset W_{\Phi,obs} \sX_\Phi$. On the other hand, if $f \in  \sM_\Phi$, then $T_\Phi^*f  = W_{obs} x$ for some $x$ in  {$\BC^n$}, and thus, $x$ must be in $\sX_\Phi$. The identities \eqref{defW} and \eqref{eqW2} show that $f= W_{\Phi,obs}x$ and  {hence} $\sM_\Phi \subset W_{\Phi,obs} \sX_\Phi$. Therefore $\sM_\Phi = W_{\Phi,obs} \sX_\Phi$.  {The identity $\sM_\Phi=\im W_{\circ,\, obs}$ for the case that $\{C,A\}$ is observable now follows directly from Corollary \ref{C:obscon}, part (i).}
\end{proof}

\begin{prop}\label{propR2}
Assume the pair $\{C,A\}$ is observable. Let $\tht\in {\fR}H_{r\ts r}^\iy$  be the two-sided inner function determined by $\sM_\Phi$. Then $\tht$  admits a state space realization of the form:
\begin{equation}
\label{realtht}
\tht(z)=D_{\tht}+zC_\Phi(I_n-zA)^{-1}B_{\tht}.
\end{equation}
Here $C_\Phi$  is as in  \eqref{basicid}, and $B_{\tht}$ and $D_{\tht}$ are matrices of sizes $n\ts r$ and $r\ts r$, respectively,  satisfying the following two identities:
\begin{equation}
\label{idunit}
(B_{\tht}^*Q_\Phi A  +D_{\tht}^*C_\Phi)E_{\sX_\Phi}=0 \ands B_{\tht}^*Q_\Phi B_{\tht}+D_{\tht}^*D_{\tht} =I_r.
\end{equation}
Finally, $Q_\Phi$ is the observability Gramian corresponding to the pair $\{C_\Phi, A\}$.
\end{prop}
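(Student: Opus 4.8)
The plan is to reduce the statement to the classical fact that a \emph{minimal} realization of an inner function has a system matrix that is unitary with respect to the observability-Gramian metric, and then to transport the resulting identities from $\sX_\Phi$ to the coordinates $\{C_\Phi,A\}$ appearing in \eqref{realtht}. Throughout I write $d=\dim\sX_\Phi$, set $A_\circ=A|_{\sX_\Phi}$ and $C_\circ=C_\Phi|_{\sX_\Phi}$ as in Corollary \ref{C:obscon} (recall from Theorem \ref{thmR1} that $\sX_\Phi$ is $A$-invariant, so the compression in Corollary \ref{C:obscon} is just a restriction), and let $Q_\circ$ be the observability Gramian of $\{C_\circ,A_\circ\}$, so that $A_\circ^*Q_\circ A_\circ+C_\circ^*C_\circ=Q_\circ$ and, since $\{C,A\}$ is observable, $Q_\circ>0$.

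First I would fix a realization of $\tht$ in these coordinates. Since $\{C,A\}$ is observable, Lemma \ref{L:MPhi} and Corollary \ref{C:obscon} give $\sM_\Phi=\im W_{\circ,\, obs}$ with $\{C_\circ,A_\circ\}$ observable, so $\sM_\Phi$ is finite dimensional of dimension $d$. By Beurling--Lax the two-sided inner $\tht$ with $\kr T_\tht^*=\sM_\Phi$ exists and is rational. Taking any minimal realization of $\tht$, the model space $\kr T_\tht^*$ of an inner function equals the range of its observability operator; as this range is $\sM_\Phi=\im W_{\circ,\, obs}$ and both observability pairs are observable, the two realizations are related by an invertible state-space similarity intertwining their pairs. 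Applying this similarity rewrites $\tht(z)=D_\tht+zC_\circ(I-zA_\circ)^{-1}B_\circ$ for some $B_\circ:\BC^r\to\sX_\Phi$ and $D_\tht:\BC^r\to\BC^r$, and this realization is again minimal, hence both observable and controllable.

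Next I would read off the algebraic identities at the level of $\sX_\Phi$. Because $\tht$ is inner, $T_\tht$ is an isometry; writing $T_\tht^*T_\tht=I_r$ block-entrywise and using the Lyapunov equation for $Q_\circ$ yields $D_\tht^*D_\tht+B_\circ^*Q_\circ B_\circ=I_r$ from the diagonal block and $B_\circ^*A_\circ^{*k}\big(C_\circ^*D_\tht+A_\circ^*Q_\circ B_\circ\big)=0$ for all $k\geq 0$ from the off-diagonal blocks; controllability of $\{A_\circ,B_\circ\}$ then forces $D_\tht^*C_\circ+B_\circ^*Q_\circ A_\circ=0$. Equivalently, the matrix $\begin{bmatrix}A_\circ&B_\circ\\ C_\circ&D_\tht\end{bmatrix}$ is isometric for the metric $\begin{bmatrix}Q_\circ&0\\0&I_r\end{bmatrix}$. (One could instead construct $\tht$ directly, completing the $Q_\circ$-isometry $\begin{bmatrix}A_\circ\\ C_\circ\end{bmatrix}$ to a $(Q_\circ\oplus I_r)$-unitary matrix and then verifying $\kr T_\tht^*=\im W_{\circ,\, obs}$ through $T_\tht^*W_{\circ,\, obs}=0$; this is the more constructive route.)

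Finally I would lift the realization to $\BC^n$ and convert the identities. Setting $B_\tht=E_{\sX_\Phi}B_\circ$ keeps the state trajectory inside the $A$-invariant space $\sX_\Phi$, and since $C_\Phi|_{\sX_\Phi}=C_\circ$ one gets $zC_\Phi(I_n-zA)^{-1}B_\tht=zC_\circ(I-zA_\circ)^{-1}B_\circ$, which is \eqref{realtht}. To pass to \eqref{idunit} I would use the bookkeeping identities $AE_{\sX_\Phi}=E_{\sX_\Phi}A_\circ$, $C_\Phi E_{\sX_\Phi}=C_\circ$, and $E_{\sX_\Phi}^*Q_\Phi E_{\sX_\Phi}=Q_\circ$ (the last by comparing the defining series of $Q_\Phi$ and $Q_\circ$ on the $A$-invariant subspace $\sX_\Phi$): substituting $B_\tht=E_{\sX_\Phi}B_\circ$ turns $D_\tht^*C_\circ+B_\circ^*Q_\circ A_\circ=0$ into $(B_\tht^*Q_\Phi A+D_\tht^*C_\Phi)E_{\sX_\Phi}=0$ and $D_\tht^*D_\tht+B_\circ^*Q_\circ B_\circ=I_r$ into $B_\tht^*Q_\Phi B_\tht+D_\tht^*D_\tht=I_r$. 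I expect the main obstacle to lie in the second paragraph, namely the identification that $\tht$ may be realized in the coordinates $\{C_\circ,A_\circ\}$ (equivalently, that its model space is the observability range) together with the implication that innerness yields the $Q_\circ$-isometry identities, where controllability of the minimal realization is exactly what upgrades the off-diagonal relation from $\im B_\circ$ to all of $\sX_\Phi$. The appearance of $E_{\sX_\Phi}$ in the first identity of \eqref{idunit} is then forced, reflecting that, by Theorem \ref{thmR1}, $C_\Phi$ is pinned down only on $\sX_\Phi$.
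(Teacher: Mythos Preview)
Your proposal is correct and follows essentially the same approach as the paper: realize $\tht$ on the invariant subspace $\sX_\Phi$ with state pair $\{C_\circ,A_\circ\}$, obtain the $Q_\circ$-isometry identities there, and then lift to $\BC^n$ via $B_\tht=E_{\sX_\Phi}B_\circ$ using the bookkeeping relations $AE_{\sX_\Phi}=E_{\sX_\Phi}A_\circ$, $C_\Phi E_{\sX_\Phi}=C_\circ$, $E_{\sX_\Phi}^*Q_\Phi E_{\sX_\Phi}=Q_\circ$. The only difference is that the paper obtains both the realization $\tht(z)=D_\circ+zC_\circ(I_{\sX_\Phi}-zA_\circ)^{-1}B_\circ$ and the full $(Q_\circ\oplus I_r)$-unitary identity \eqref{defBD0} by citing \cite[Theorem III.7.2]{FFGK98}, whereas you derive the isometry relations directly from $T_\tht^*T_\tht=I$ and use controllability of the minimal realization to upgrade the off-diagonal relation from $\bigvee_{k\geq 0}A_\circ^k\im B_\circ$ to all of $\sX_\Phi$; your parenthetical ``constructive route'' is precisely what that citation does.
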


\begin{proof}[\bf Proof]
To prove the proposition we apply Theorem III.7.2  in \cite{FFGK98}. Define $A_\circ$ and $B_\circ$ as in Corollary \ref{C:obscon} and let $W_{\circ,\, obs}$ and  $Q_\circ$  be the observability operator, respectively observability Gramian, defined by the pair $\{C_\circ, A_\circ\}$.  Note that, by Lemma \ref{L:MPhi},
\[
\im W_{\circ,\, obs}=W_{\Phi,\,obs}\sX_\Phi= \sM_\Phi=\kr T_\tht^*.
\]
Since the pair $\{C , A \}$ is observable, the same holds true for the pair $\{C_\circ, A_\circ\}$, by Corollary \ref{C:obscon}.  Hence $Q_\circ$ is strictly positive, and according to \cite[Theorem III.7.2]{FFGK98}, see also \cite[Lemma 3.2]{KZ99}, there exist linear maps $B_\circ: \BC^r\to\sX_\circ$ and $D_\circ:\BC^r\to \BC^r$ such that
\begin{align}
& \tht(z)=D_\circ+zC_\circ (I_{\sX_\Phi} -zA_\circ)^{-1}B_\circ,  \label{realtht0} \\[.2cm]
&  \begin{bmatrix}
  A_\circ^*    &   C_\circ^* \\
  B_\circ^*    &   D_\circ^*
\end{bmatrix}
\begin{bmatrix}
 Q_\circ    &  0 \\
  0    &  I_r
\end{bmatrix}
\begin{bmatrix}
  A_\circ     &   B_\circ  \\
  C_\circ   &   D_\circ
\end{bmatrix}=
\begin{bmatrix}
 Q_\circ    &  0 \\
  0    &  I_r
\end{bmatrix}.\label{defBD0}
\end{align}

Now put $D_{\tht}=D_\circ$ and  $B_{\tht}=E_{\sX_\Phi}B_\circ$, where $E_{\sX_\Phi}$ is the canonical embedding of $\sX_\Phi$ into $\BC^n$.  Then \eqref{realtht}  and \eqref{idunit} are satisfied. To see this we first note that the definitions of $A_\circ$ and $C_\circ$ yield
\begin{equation}\label{AC1}
E_{\sX_\Phi}A_\circ=A E_{\sX_\Phi} \ands C_\circ =C_\Phi E_{\sX_\Phi}.
\end{equation}
The first identity implies that
\[
E_{\sX_\Phi}(I_{\sX_\Phi}-zA_\circ)=E_{\sX_\Phi}-z E_{\sX_\Phi}A_\circ=
E_{\sX_\Phi}-z AE_{\sX_\Phi}=(I_n-zA)E_{\sX_\Phi}.
\]
This yields $(I_n-zA)^{-1}E_{\sX_\Phi}=E_{\sX_\Phi} (I_{\sX_\Phi}-zA_\circ)^{-1}$. Recall that $B_{\tht}=E_{\sX_\Phi}B_\circ$. It follows that
\[
(I_n-zA)^{-1}B_{\tht}=(I_n-zA)^{-1}E_{\sX_\Phi}B_\circ=E_{\sX_\Phi} (I_{\sX_\Phi}-zA_\circ)^{-1}B_\circ.
\]
Since $D_{\tht}=D_\circ$ and $C_\Phi=C_\circ E_{\sX_\Phi}$ we see that \eqref{realtht0} implies \eqref{realtht}.

Next we prove \eqref{idunit}. To do this note that   \eqref{defBD0} yields the following two identities (see also \cite[Lemma III.7.3]{FFGK98}):
\begin{equation}
\label{idunit0}
B_\circ^*Q_\circ A_\circ  +D_\circ^*C_\circ =0 \ands B_\circ^*Q_\circ B_\circ+D_\circ^*D_\circ =I_r.
\end{equation}
Using $Q_\circ=E_{\sX_\Phi}^*Q_\Phi E_{\sX_\Phi}$,  $B_{\tht}=E_{\sX_\Phi}B_\circ$, and the first identity in \eqref{AC1} we obtain
\[
B_\circ^*Q_\circ A_\circ=B_\circ^*E_{\sX_\Phi}^*Q_\Phi E_{\sX_\Phi} A_\circ=B_{\tht}^*Q_\Phi A E_{\sX_\Phi} .
\]
Similarly, using the second identity in \eqref{AC1}, we get  $D_\circ^*C_\circ =D_{\tht}^*C_\Phi E_{\sX_\Phi}$. It follows that
\begin{align*}
(B_{\tht}^*Q_\Phi A+D_{\tht}^*C_\Phi)E_{\sX_\Phi}&=  B_{\tht}^*Q_\Phi AE_{\sX_\Phi} + D_{\tht}^*C_\Phi E_{\sX_\Phi} \\[.2cm]
    &  =  B_\circ^*Q_\circ A_\circ+D_\circ^*C_\circ =0.
\end{align*}
This proves the first identity in \eqref{idunit}. The second  identity in \eqref{idunit} follows from  $B_{\tht}=E_{\sX_\Phi}B_\circ$, $D_{\tht}=D_\circ$, and $Q_\circ=E_{\sX_\Phi}^*Q_\Phi E_{\sX_\Phi}$. Indeed,
\begin{align*}
B_{\tht}^*Q_\Phi B_{\tht}+D_{\tht}^*D_{\tht} &= B_\circ^*E_{\sX_\Phi}^* Q_\Phi E_{\sX_\Phi}B_\circ+D_\circ^*D_\circ\\[.2cm]
&=B_\circ^*Q_\circ B_\circ +D_\circ^*D_\circ=I_r.\qedhere
\end{align*}
\end{proof}

\begin{lem}\label{leminv} Assume the pair $\{C, A\}$ is observable. In that case the linear map
\begin{equation}
\label{defOm} \om_\Phi=E_{\sX_\Phi}^*Q_\Phi E_{\sX_\Phi}:  \sX_\Phi \to \sX_\Phi \ \mbox{is invertible}.
\end{equation}
Furthermore, the orthogonal projection  of  $\ell_+^2(\BC^r)$ mapping $\ell_+^2(\BC^r)$ onto the finite dimensional space $ \sM_\Phi$  is given by
\begin{equation}
\label{proj1}
P_{\sM_\Phi}=W_{\Phi,\,obs}{\de} W_{\Phi,\,obs}^*,\  \mbox{where ${\de}=E_{\sX_\Phi}\om_\Phi^{-1}E_{\sX_\Phi}^*:\BC^n\to\BC^n$.}
\end{equation}
\end{lem}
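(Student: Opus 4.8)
The plan is to reduce both assertions to one elementary fact from operator theory: if $T$ is an injective operator with closed range, then the orthogonal projection onto $\im T$ equals $T(T^*T)^{-1}T^*$. I would apply this with the restricted observability operator $T=W_{\Phi,\,obs}E_{\sX_\Phi}\colon\sX_\Phi\to\ell_+^2(\BC^r)$, which, by Corollary \ref{C:obscon}(i), coincides with $W_{\circ,\,obs}$.

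First I would handle the invertibility of $\om_\Phi$. Since $Q_\Phi=W_{\Phi,\,obs}^*W_{\Phi,\,obs}$ by definition, we have
\[
\om_\Phi=E_{\sX_\Phi}^*Q_\Phi E_{\sX_\Phi}=\big(W_{\Phi,\,obs}E_{\sX_\Phi}\big)^*\big(W_{\Phi,\,obs}E_{\sX_\Phi}\big)=T^*T.
\]
Because $\{C,A\}$ is assumed observable, Corollary \ref{C:obscon}(i) tells us that $W_{\Phi,\,obs}|_{\sX_\Phi}$, that is $T$, is one-to-one. As the domain $\sX_\Phi$ is finite dimensional, $T$ has closed range and $T^*T=\om_\Phi$ is strictly positive on $\sX_\Phi$, hence invertible; this is exactly \eqref{defOm}. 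In the notation of Corollary \ref{C:obscon} this merely records that $\om_\Phi=Q_\circ$ is the (strictly positive) observability Gramian of the observable pair $\{C_\circ,A_\circ\}$.

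For the projection formula I would first identify the target space. By Lemma \ref{L:MPhi}, $\sM_\Phi=W_{\Phi,\,obs}\sX_\Phi=\im T$, which is finite dimensional and hence closed. Applying the range-projection formula to the injective, closed-range operator $T$ gives
\[
P_{\sM_\Phi}=P_{\im T}=T(T^*T)^{-1}T^*=W_{\Phi,\,obs}E_{\sX_\Phi}\,\om_\Phi^{-1}\,E_{\sX_\Phi}^*W_{\Phi,\,obs}^*.
\]
Regrouping the three middle factors as $E_{\sX_\Phi}\om_\Phi^{-1}E_{\sX_\Phi}^*=\de$ yields $P_{\sM_\Phi}=W_{\Phi,\,obs}\,\de\,W_{\Phi,\,obs}^*$, which is \eqref{proj1}.

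There is no serious obstacle in this lemma; the work is bookkeeping with the operators already in hand. The only points deserving care are, first, the explicit use of the finite dimensionality of $\sX_\Phi$, which is what turns the injectivity of $T$ supplied by Corollary \ref{C:obscon}(i) into genuine invertibility of $T^*T$ (rather than mere injectivity) and guarantees that $\im T$ is closed; and, second, the verification of the range-projection formula, i.e. that $T(T^*T)^{-1}T^*$ is self-adjoint, idempotent, and has range exactly $\im T$. The latter is a routine computation once $(T^*T)^{-1}=\om_\Phi^{-1}$ is available from the first part, and may alternatively be phrased through the Moore-Penrose inverse via $T^+=(T^*T)^{-1}T^*$ and $P_{\im T}=TT^+$.
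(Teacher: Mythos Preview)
Your proposal is correct and follows essentially the same approach as the paper: both introduce the restricted observability operator $\Lambda=W_{\Phi,\,obs}E_{\sX_\Phi}$ (your $T$), use Corollary~\ref{C:obscon}(i) to conclude injectivity, and then apply the standard formula $P_{\im\Lambda}=\Lambda(\Lambda^*\Lambda)^{-1}\Lambda^*$ after identifying $\Lambda^*\Lambda=\om_\Phi$ and $\im\Lambda=\sM_\Phi$. Your write-up is slightly more explicit about the role of finite dimensionality and cites Lemma~\ref{L:MPhi} for the range identification, but the argument is the same.
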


\begin{proof}[\bf Proof]
Note that $Q_\Phi=W_{\Phi,\, obs}^*W_{\Phi,\, obs}$. Since by assumption $\{C, A\}$ is observable, Corollary \ref{C:obscon}, part (i), shows that $W_{\Phi, \, obs}$ is one-to-one on $\sX_\Phi$. The latter is equivalent to $\om_\Phi$ being invertible.

Next, let $\la:\sX_\Phi\to \ell_+^2(\BC^r)$ be the map defined by $\la=W_{\Phi, \, obs}E_{\sX_\Phi}$. Then $\la$ is one-to-one and its range is closed  and equals $\sM_\Phi$. Hence the orthogonal projection onto $\sM_\Phi$ is given by $\la^*(\la^*\la)^{-1}\la$ which yields \eqref{proj1}.
\end{proof}

\begin{cor}\label{cormho2}
The linear map ${\de}$ on $\BC^n$ defined in the second part of  \eqref{proj1} is equal to the controllability Gramian of the pair $\{A, B_\tht\}$, where $B_\tht$ is as in \eqref{realtht}.
\end{cor}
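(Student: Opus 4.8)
The goal is to show that the operator $\de = E_{\sX_\Phi}\om_\Phi^{-1}E_{\sX_\Phi}^*$ from Lemma \ref{leminv} coincides with the controllability Gramian $Q_\tht := \sum_{\nu=0}^\iy A^\nu B_\tht B_\tht^* (A^*)^\nu$ of the pair $\{A,B_\tht\}$. The plan is to characterize $\de$ as the unique solution of a Stein equation and then verify that $Q_\tht$ satisfies the same equation; since $A$ is stable the Stein equation has a unique solution, so the two must agree.

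First I would write down the defining Stein equation for $Q_\tht$, namely $Q_\tht = A Q_\tht A^* + B_\tht B_\tht^*$, which holds automatically because $A$ is stable. The work is therefore to show that $\de$ satisfies the \emph{same} identity, i.e.
\[
\de = A\de A^* + B_\tht B_\tht^*.
\]
To do this I would exploit the two relations available for $\de$: the projection formula $P_{\sM_\Phi}=W_{\Phi,\,obs}\,\de\, W_{\Phi,\,obs}^*$ from \eqref{proj1}, together with the intertwining $S_r^* W_{\Phi,\,obs}E_{\sX_\Phi} = W_{\Phi,\,obs}A_\circ$ (equivalently $S_r^* W_{\Phi,\,obs} = W_{\Phi,\,obs}A$ on $\sX_\Phi$, as established in the proof of Theorem \ref{thmR1}). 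The key computational input is the unitarity relations \eqref{idunit}: $(B_\tht^*Q_\Phi A + D_\tht^*C_\Phi)E_{\sX_\Phi}=0$ and $B_\tht^*Q_\Phi B_\tht + D_\tht^*D_\tht = I_r$, which encode that the colligation producing $\tht$ is isometric.

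The cleanest route is to pass through the inner function $\tht$ and its two-sided inner property. Since $\tht$ is two-sided inner, $T_\tht$ is an isometry and $\sM_\Phi = \kr T_\tht^* = (\im T_\tht)^\perp$, so $P_{\sM_\Phi} = I - T_\tht T_\tht^*$. I would combine this with the state space realization \eqref{realtht} of $\tht$ to compute $T_\tht T_\tht^*$, or more efficiently use the standard fact (following from \eqref{idunit}, in the same spirit as \cite[Theorem III.7.2]{FFGK98}) that the projection onto the model space $\kr T_\tht^*$ of an inner function with observable realization $\{C_\Phi, A, B_\tht, D_\tht\}$ is $W_{\Phi,\,obs}\, Q_\tht\, W_{\Phi,\,obs}^*$, where $Q_\tht$ is precisely the controllability Gramian of $\{A, B_\tht\}$. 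Comparing this with \eqref{proj1} and using that $W_{\Phi,\,obs}|_{\sX_\Phi}$ is injective (Corollary \ref{C:obscon}(i), valid since $\{C,A\}$ is observable) gives $\de = Q_\tht$ on $\sX_\Phi$; on the complement both sides need to be reconciled, which is where the embedding $E_{\sX_\Phi}$ in the definition of $\de$ does its job.

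The main obstacle I anticipate is the bookkeeping between the ambient space $\BC^n$ and the invariant subspace $\sX_\Phi$: $\de$ is built to vanish off $\sX_\Phi$ via the embeddings $E_{\sX_\Phi}\om_\Phi^{-1}E_{\sX_\Phi}^*$, whereas $Q_\tht$ is defined on all of $\BC^n$, so I must verify that $Q_\tht$ likewise has range and support in $\sX_\Phi$. This should follow because $B_\tht = E_{\sX_\Phi}B_\circ$ takes values in $\sX_\Phi$ and $\sX_\Phi$ is $A$-invariant, so every term $A^\nu B_\tht B_\tht^*(A^*)^\nu$ maps into $\sX_\Phi$ and annihilates $\sX_\Phi^\perp$; hence $Q_\tht = E_{\sX_\Phi} Q_\circ E_{\sX_\Phi}^*$ where $Q_\circ$ is the controllability Gramian of $\{A_\circ, B_\circ\}$. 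The remaining step is then the finite-dimensional identity $\om_\Phi^{-1} = Q_\circ$, i.e. that the controllability Gramian of $\{A_\circ,B_\circ\}$ is the inverse of the observability Gramian $\om_\Phi = Q_\circ^{\,obs}$ of $\{C_\circ,A_\circ\}$; this is exactly the content of the unitarity relations \eqref{idunit0} via the standard duality for the inner colligation, and is the one genuinely computational point to nail down.
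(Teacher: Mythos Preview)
Your proposal is correct and follows essentially the same route as the paper. The paper's proof inverts the isometry relation \eqref{defBD0} to obtain $A_\circ Q_\circ^{-1}A_\circ^* + B_\circ B_\circ^* = Q_\circ^{-1}$, lifts this via $E_{\sX_\Phi}$ (using $E_{\sX_\Phi}A_\circ = AE_{\sX_\Phi}$ and $B_\tht = E_{\sX_\Phi}B_\circ$) to get $B_\tht B_\tht^* = \de - A\de A^*$, and then invokes uniqueness of the Stein solution; this is exactly the ``one genuinely computational point'' you isolate in your final paragraph, namely that the controllability Gramian of $\{A_\circ,B_\circ\}$ is $\om_\Phi^{-1}$, together with the $E_{\sX_\Phi}$ bookkeeping you describe.
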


\begin{proof}[\bf Proof]
We shall freely use the notation introduced in the proof of Proposition \ref{propR2}. Since $Q_\circ$ is invertible, the identity \eqref{defBD0} implies that
\[
 \begin{bmatrix}
  A_\circ     &   B_\circ  \\
  C_\circ   &   D_\circ
\end{bmatrix}
\begin{bmatrix}
 Q_\circ^{-1}    &  0 \\
  0    &  I_r
\end{bmatrix}
\begin{bmatrix}
  A_\circ^*    &   C_\circ^* \\
  B_\circ^*    &   D_\circ^*
\end{bmatrix}=
\begin{bmatrix}
 Q_\circ^{-1}    &  0 \\
  0    &  I_r
\end{bmatrix}.
\]
In particular, we have $A_\circ  Q_\circ^{-1} A_\circ^*+B_\circ B_\circ^*= Q_\circ^{-1}$. In other words
\begin{equation}
\label{GramBcirc}
 Q_\circ^{-1}-A_\circ  Q_\circ^{-1} A_\circ^*=B_\circ B_\circ^*.
\end{equation}
Now recall that $B_{\tht}=E_{\sX_\Phi}B_\circ$ and $AE_{\sX_\Phi}=A_\circ E_{\sX_\Phi}$. It follows that
\begin{align}
B_{\tht}B_{\tht}^*&=E_{\sX_\Phi}B_\circ B_\circ^*E_{\sX_\Phi}^*   \nonumber \\[.2cm]
    & = E_{\sX_\Phi}Q_\circ^{-1}E_{\sX_\Phi}^*  -E_{\sX_\Phi}A_\circ  Q_\circ^{-1} A_\circ^*E_{\sX_\Phi}^* \nonumber  \\[.2cm]
    &= E_{\sX_\Phi}Q_\circ^{-1}E_{\sX_\Phi}^*- AE_{\sX_\Phi}Q_\circ^{-1}E_{\sX_\Phi}^*A^*.\label{GramBcirc2}
\end{align}
Since $W_{\circ, \, obs}= W_{\Phi, \, obs}E_{\sX_\Phi}$, we see that
\[
Q_\circ=W_{\circ, \, obs}^*W_{\circ, \, obs}= E_{\sX_\Phi}^*W_{\Phi, \, obs}^*W_{\Phi, \, obs}E_{\sX_\Phi}=E_{\sX_\Phi}^*Q_\Phi E_{\sX_\Phi}.
\]
It follows that  $Q_\circ=\om_\Phi$, where $\om_\Phi$ is defined by \eqref{defOm}. But then \eqref{GramBcirc2} and the definition of ${\de}$ in \eqref{proj1} yield
\[
B_{\tht}B_{\tht}^*=E_{\sX_\Phi}Q_\circ^{-1}E_{\sX_\Phi}^*- AE_{\sX_\Phi}Q_\circ^{-1}E_{\sX_\Phi}^*A^*={\de} -A{\de} A^*.
\]
This proves that ${\de}$ is the controllability Gramian of the pair $\{A, B_\tht\}$.
\end{proof}

\begin{lem}\label{L:DSSprep}
Let $\tht\in H^\iy_{r\ts r}$ be the two-sided inner function determined by $\sM_\Phi$. Then $\im H_\Phi\subset \kr T_\tht^*$ and
\begin{equation}\label{canDSScond}
\im H_\Phi = \kr T_\tht^* \quad \Longleftrightarrow\quad \sX_\Phi=\bigvee_{\nu\geq 0} A^\nu \ga \BC^m.
\end{equation}
\end{lem}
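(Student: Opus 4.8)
The plan is to prove the inclusion $\im H_\Phi \subset \kr T_\tht^*$ first, and then establish the equivalence \eqref{canDSScond} by relating $\im H_\Phi$ to the observability image via the state space data. For the inclusion, recall that given the realization \eqref{eqPhi} of $\Phi$, we have $H_\Phi = W_{\Phi,\,obs} W_{con}$, where $W_{con}$ is the controllability operator of the pair $\{A,\ga\}$. Hence $\im H_\Phi \subset \im W_{\Phi,\,obs}|_{\ran W_{con}}$. Since $\im W_{con} = \bigvee_{\nu\geq 0} A^\nu \ga \BC^m \subset \sX_\Phi$ by Theorem \ref{thmR1}, we get $\im H_\Phi \subset W_{\Phi,\,obs}\sX_\Phi = \sM_\Phi = \kr T_\tht^*$, using Lemma \ref{L:MPhi}. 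This gives the first claim directly.

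For the equivalence, the key observation is that $\im H_\Phi = W_{\Phi,\,obs}\bigl(\bigvee_{\nu\geq 0} A^\nu\ga\BC^m\bigr)$, again because $H_\Phi = W_{\Phi,\,obs}W_{con}$ and $\ran W_{con} = \bigvee_{\nu\geq 0} A^\nu\ga\BC^m$. Write $\sR = \bigvee_{\nu\geq 0} A^\nu\ga\BC^m$ for brevity; it is an $A$-invariant subspace contained in $\sX_\Phi$. The inclusion $\im H_\Phi = W_{\Phi,\,obs}\sR \subset W_{\Phi,\,obs}\sX_\Phi = \sM_\Phi = \kr T_\tht^*$ is then an equality precisely when $W_{\Phi,\,obs}\sR = W_{\Phi,\,obs}\sX_\Phi$. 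So the plan is to show that, under the standing observability hypothesis on $\{C,A\}$, this last equality of images is equivalent to $\sR = \sX_\Phi$.

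The direction $\sR=\sX_\Phi \Rightarrow \im H_\Phi = \kr T_\tht^*$ is immediate from the identity $\im H_\Phi = W_{\Phi,\,obs}\sR$. For the converse, suppose $W_{\Phi,\,obs}\sR = W_{\Phi,\,obs}\sX_\Phi$. The point is that $W_{\Phi,\,obs}$ restricted to $\sX_\Phi$ is injective: the excerpt does not make observability of $\{C,A\}$ a hypothesis of this lemma, so I would first reduce to that case or invoke it as needed. Under observability of $\{C,A\}$, Corollary \ref{C:obscon}(i) gives that $W_{\Phi,\,obs}|_{\sX_\Phi}$ is one-to-one on $\sX_\Phi$. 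Since both $\sR$ and $\sX_\Phi$ are contained in $\sX_\Phi$ with $\sR\subset\sX_\Phi$, injectivity of $W_{\Phi,\,obs}$ on $\sX_\Phi$ forces $W_{\Phi,\,obs}\sR = W_{\Phi,\,obs}\sX_\Phi$ to imply $\sR = \sX_\Phi$. This closes the equivalence.

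The main obstacle I anticipate is the injectivity step and its hypotheses. The lemma as stated in the excerpt does not explicitly assume $\{C,A\}$ observable, yet the converse direction genuinely needs $W_{\Phi,\,obs}|_{\sX_\Phi}$ to be one-to-one, which Corollary \ref{C:obscon}(i) supplies only under observability of $\{C,A\}$. I would therefore either carry the observability assumption (as is done for the preceding Proposition \ref{propR2} and Lemma \ref{L:MPhi}), or verify directly that the relevant injectivity on $\sX_\Phi$ holds in the present setting. Modulo that hypothesis, the argument is essentially a clean translation: express $\im H_\Phi$ as $W_{\Phi,\,obs}\sR$, use Lemma \ref{L:MPhi} to identify $\kr T_\tht^*$ with $W_{\Phi,\,obs}\sX_\Phi$, and then push the equality of images back to equality of subspaces via injectivity. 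No heavy computation is required; the care is all in keeping track of which subspace each image lives in and in applying the injectivity at the right moment.
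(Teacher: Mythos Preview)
Your argument is correct and follows essentially the same route as the paper: write $\im H_\Phi = W_{\Phi,\,obs}\,\sX_{con}$ with $\sX_{con}=\bigvee_{\nu\geq 0}A^\nu\ga\BC^m\subset\sX_\Phi$, identify $\kr T_\tht^*=W_{\Phi,\,obs}\sX_\Phi$ via Lemma~\ref{L:MPhi}, and then use injectivity of $W_{\Phi,\,obs}$ on $\sX_\Phi$ (Corollary~\ref{C:obscon}(i)) to pass from equality of images to equality of subspaces. Your remark about the observability hypothesis is well taken: the paper's proof also invokes Corollary~\ref{C:obscon} at this point, so the standing assumption that $\{C,A\}$ is observable (made explicit in the surrounding results) is indeed being used, even though it is not repeated in the statement of the lemma.
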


\begin{proof}[\bf Proof]
Set $\sX_{con}=\vee_{\nu\geq 0} A^\nu \ga \BC^m$. Note that $\sX_{con}= \im W_{con}$, where $W_{con}$ is the controllability operator defined by the pair $\{A,\ga\}$, and $\sX_{con}\subset\sX_\Phi$, by Theorem \ref{thmR1}. Then
\[
\im H_\Phi=\im W_{\Phi,\,obs} W_{con}=W_{\Phi,\,obs} \sX_{con}\subset W_{\Phi,\,obs} \sX_{\Phi}=\sM_\Phi=\kr T_\tht^*.
\]
The last but one identity follows from \eqref{defM}. Moreover, since $W_{obs}|_{\sX_\circ}$ is one-to-one, by Corollary \ref{C:obscon}, the above inclusion $W_{\Phi,\,obs} \sX_{con}\subset W_{\Phi,\,obs} \sX_{\Phi}$ turns into an identity if and only if $\sX_{con}=\sX_{\Phi}$. Hence \eqref{canDSScond} holds.
\end{proof}

\paragraph{Step 3: The function $F$.}
The final step in the procedure asks for a state space realization for the function $F$ given by $F=\Phi^*\tht$. The following proposition provides such a realization.

\begin{prop}\label{propR3}
Let $\Phi\in {\fR}H_{r\ts m}^\iy$  be the outer spectral factor of the function $R$ given by  \eqref{defR}, and let $\tht\in {\fR}H_{r\ts r}^\iy$ be the two-sided inner function determined by $\sM_\Phi$.  Assume the pair $\{C,A\}$ is observable. Then the function $F=\Phi^*\tht$ belongs to ${\fR}H_{m\ts r}^\iy$, and $F$ admits the following state space realization:
\begin{equation}
\label{realF}
F(z)=  {\Phi(0)^*}D_{\tht} +\ga^*Q_\Phi B_{\tht} +zC(I_n-zA)^{-1}B_{\tht}.
\end{equation}
Here $Q_\Phi$ is the observability Gramian of the pair $\{C_\Phi, A\}$, and $B_{\tht}$ and $D_{\tht}$ are as in \eqref{realtht}. Furthermore,
\begin{equation}
\label{idRF}
T_R=T_FT_F^*+W_{obs}{\de} W_{obs}^*,
\end{equation}
where ${\de} $ is the linear map on $\BC^n $ defined in  the second part of \eqref{proj1} or, equivalently,  ${\de} $ is the controllability Gramian of the pair $\{A, B_\tht \}$.
\end{prop}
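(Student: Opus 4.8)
The plan is to obtain \eqref{realF} by multiplying out the state space realizations of $\Phi^*$ and $\tht$, and then to deduce \eqref{idRF} purely on the level of Toeplitz operators. From \eqref{eqPhi} we have $\Phi^*(z)=\Phi(0)^*+\ga^*(zI_n-A^*)^{-1}C_\Phi^*$, and $\tht$ is given by \eqref{realtht}. Expanding the product $F(z)=\Phi^*(z)\tht(z)$ produces four terms: the constant $\Phi(0)^*D_\tht$; the analytic term $\Phi(0)^*zC_\Phi(I_n-zA)^{-1}B_\tht$; the anti-analytic term $\ga^*(zI_n-A^*)^{-1}C_\Phi^*D_\tht$; and a mixed term carrying both resolvents $(zI_n-A^*)^{-1}$ and $(I_n-zA)^{-1}$.

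The core of the computation is this mixed term. First I would insert the Stein relation $C_\Phi^*C_\Phi=Q_\Phi-A^*Q_\Phi A$, which defines the observability Gramian $Q_\Phi$, into its middle factor; then the elementary resolvent identities $(zI_n-A^*)^{-1}A^*=z(zI_n-A^*)^{-1}-I_n$ and $zA(I_n-zA)^{-1}=(I_n-zA)^{-1}-I_n$ let me separate the mixed term into a purely anti-analytic piece $\ga^*z(zI_n-A^*)^{-1}Q_\Phi B_\tht$, a purely analytic piece $\ga^*Q_\Phi(I_n-zA)^{-1}B_\tht$, and a constant $-\ga^*Q_\Phi B_\tht$. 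Adding the anti-analytic piece to the anti-analytic term above and using $z(zI_n-A^*)^{-1}=I_n+A^*(zI_n-A^*)^{-1}$, the entire anti-analytic content of $F$ collapses to $\ga^*Q_\Phi B_\tht+\ga^*(zI_n-A^*)^{-1}\big(C_\Phi^*D_\tht+A^*Q_\Phi B_\tht\big)$.

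The decisive step --- and the one where the specific choice of $\tht$ is indispensable --- is to show the remaining resolvent term vanishes, so that $F$ is genuinely analytic. Taking adjoints in the first identity of \eqref{idunit} gives $E_{\sX_\Phi}^*\big(C_\Phi^*D_\tht+A^*Q_\Phi B_\tht\big)=0$, i.e.\ the vector $C_\Phi^*D_\tht+A^*Q_\Phi B_\tht$ lies in $\sX_\Phi^\perp$. Since $\im\ga\subset\sX_\Phi$ and $\sX_\Phi$ is invariant under $A$ (Theorem \ref{thmR1}), the expansion $\ga^*(zI_n-A^*)^{-1}=\sum_{k\geq0}z^{-k-1}(A^k\ga)^*$ pairs vectors of $\sX_\Phi$ against a vector of $\sX_\Phi^\perp$, so every coefficient is zero and the tail disappears; in particular $F\in{\fR}H_{m\ts r}^\iy$. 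For the surviving analytic content I would add the analytic piece $\ga^*Q_\Phi(I_n-zA)^{-1}B_\tht$ to the analytic term $\Phi(0)^*zC_\Phi(I_n-zA)^{-1}B_\tht$, split off a constant via $(I_n-zA)^{-1}=I_n+zA(I_n-zA)^{-1}$, and invoke \eqref{cstar}, which says that $\Phi(0)^*C_\Phi+\ga^*Q_\Phi A$ agrees with $C$ on $\sX_\Phi$; together with $\im B_\tht\subset\sX_\Phi$ and the $A$-invariance of $\sX_\Phi$ this replaces the resolvent coefficient by $C$. Bookkeeping of the constants then produces precisely $\Phi(0)^*D_\tht+\ga^*Q_\Phi B_\tht$, giving \eqref{realF}.

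For the identity \eqref{idRF} I would work with Toeplitz operators. Because $\Phi^*$ is co-analytic while $\Phi$ and $\tht$ are analytic, one has $T_R=T_\Phi^*T_\Phi$ and $T_F=T_\Phi^*T_\tht$, so $T_FT_F^*=T_\Phi^*T_\tht T_\tht^*T_\Phi$. As $\tht$ is two-sided inner with $\kr T_\tht^*=\sM_\Phi$, the operator $T_\tht$ is an isometry and $T_\tht T_\tht^*=I-P_{\sM_\Phi}$; hence $T_FT_F^*=T_\Phi^*T_\Phi-T_\Phi^*P_{\sM_\Phi}T_\Phi=T_R-T_\Phi^*P_{\sM_\Phi}T_\Phi$, and it remains to identify $T_\Phi^*P_{\sM_\Phi}T_\Phi$ with $W_{obs}\de W_{obs}^*$. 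Using $P_{\sM_\Phi}=W_{\Phi,\,obs}\de W_{\Phi,\,obs}^*$ from Lemma \ref{leminv} and factoring $\de=G_1G_1^*$ with $G_1=E_{\sX_\Phi}\om_\Phi^{-1/2}$ (legitimate since $\om_\Phi$ is positive and invertible), the intertwining relation \eqref{basicid} gives $T_\Phi^*W_{\Phi,\,obs}G_1=W_{obs}G_1$, whence $T_\Phi^*P_{\sM_\Phi}T_\Phi=(W_{obs}G_1)(W_{obs}G_1)^*=W_{obs}\de W_{obs}^*$. Finally Corollary \ref{cormho2} identifies $\de$ as the controllability Gramian of $\{A,B_\tht\}$, completing the proof. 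The one genuine obstacle throughout is the anti-analytic cancellation of the third paragraph; everything else is bookkeeping with resolvent identities and the already-established relations \eqref{basicid}, \eqref{cstar} and \eqref{idunit}.
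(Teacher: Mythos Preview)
Your proposal is correct and follows essentially the same route as the paper's proof: expand $\Phi^*(z)\tht(z)$, use the Stein relation for $Q_\Phi$ to split the mixed resolvent term, kill the anti-analytic tail via \eqref{idunit} together with $A$-invariance of $\sX_\Phi$ and $\im\ga\subset\sX_\Phi$, and simplify the analytic tail via \eqref{cstar}; for \eqref{idRF} both you and the paper write $T_R=T_\Phi^*T_\Phi$, $T_F=T_\Phi^*T_\tht$, use $I-T_\tht T_\tht^*=P_{\sM_\Phi}$, and then collapse $T_\Phi^*P_{\sM_\Phi}T_\Phi$ to $W_{obs}\de W_{obs}^*$ by \eqref{basicid} and Lemma~\ref{leminv}. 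The only cosmetic differences are that the paper packages the Stein relation as the single identity $zC_\Phi^*C_\Phi=(zI_n-A^*)Q_\Phi+A^*Q_\Phi(I_n-zA)$ (slightly cleaner than your two resolvent identities), and that you phrase the anti-analytic cancellation via orthogonality ($C_\Phi^*D_\tht+A^*Q_\Phi B_\tht\in\sX_\Phi^\perp$) while the paper takes the dual viewpoint; likewise your factorization $\de=G_1G_1^*$ is just a repackaging of the paper's observation that $\de$ maps $\sX_\Phi$ into itself and vanishes on $\sX_\Phi^\perp$.
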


\begin{proof}[\bf Proof]
Since $Q_\Phi$ is the observability Gramian of the pair $\{C_\Phi, A\}$, we have, $Q_\Phi -A^* Q_\Phi A=C_\Phi^*C_\Phi$, and hence
\begin{equation}
\label{idCC*}
zC_\Phi^*C_\Phi = (zI_n-A^*)Q_\Phi +A^* Q_\Phi(I_n-zA) \quad (z\in \BC).
\end{equation}
To get \eqref{realF} we  use the state space formulas  \eqref{eqPhi} and \eqref{realtht} which  represent   $\Phi$ and $\tht$, respectively. This yields:
\begin{align*}
F(z) &= \Phi^*(z)\tht(z)  \\
&= \Phi(0)^*D_{\tht}+\ga^* (zI_n-A^*)^{-1}C_\Phi^*D_{\tht}+\\
&\hspace{2cm}+z\Phi(0)^*C_\Phi (I_n-zA)^{-1}B_{\tht}+\a(z),
\end{align*}
where
\[
\a(z)=\ga^* (zI_n-A^*)^{-1}(zC_\Phi^*C_\Phi)(I_n-zA)^{-1}B_{\tht}.
\]
The identity \eqref{idCC*} then shows that
\begin{align*}
\a(z) &= \ga^* (zI_n-A^*)^{-1}\Big[ (zI_n-A^*)Q_\Phi +A^* Q_\Phi(I_n-zA)\Big](I_n-zA)^{-1}B_{\tht}\\
&=\ga^*Q_\Phi(I_n-zA)^{-1}B_{\tht}+\ga^* (zI_n-A^*)^{-1}A^* Q_\Phi B_{\tht}\\
&=\ga^*Q_\Phi B_{\tht}+ z\ga^*Q_\Phi A(I_n-zA)^{-1}B_{\tht}+\ga^* (zI_n-A^*)^{-1}A^* Q_\Phi B_{\tht}.
\end{align*}
It follows that
\begin{align}
F(z) &= \Phi(0)^*D_{\tht}+\ga^*Q_\Phi B_{\tht}+\nonumber\\
&\hspace{1cm}+\ga^* (zI_n-A^*)^{-1}(C_\Phi^*D_{\tht}+A^* Q_\Phi B_{\tht})+\label{2ndterm}\\
&\hspace{1cm}+ z(\Phi(0)^*C_\Phi +\ga^*Q_\Phi A)(I_n-zA)^{-1}B_{\tht}.\label{3rdterm}
\end{align}

Recall that $B_{\tht}=E_{\sX_\Phi}B_\circ$, where $B_\circ$ is as in \eqref{defBD0}.  In particular,  $B_{\tht}$ maps $\BC^r$ into $\sX_\Phi$.  But  $\sX_\Phi$ is invariant under $A$. Therefore $(I_n-zA)^{-1}B_{\tht}$ maps $\BC^r$ into $\sX_\Phi$. In other words,  for each $u\in \BC^r$ the vector $x=(I_n-zA)^{-1}B_{\tht}u$ belongs to $\sX_\Phi$. Hence $(\Phi(0)^*C_\Phi +\ga^*Q_\Phi A)x=Cx$ by \eqref{cstar}, and it follows that  \eqref{3rdterm} is equal to $+zC(I_n-zA)^{-1}B_{\tht}$.

Next we show that the term in \eqref{2ndterm} is zero. To accomplish
 this, note that \eqref{idunit} shows that $(D_{\tht}^*C_\Phi+B_{\tht}^*Q_\Phi A^*)x=0$
 for each $x\in \sX_\Phi$.  Since $\im \ga\subset \sX_\Phi$ and the space $\sX_\Phi$
 is invariant under $A$, we have the   inclusion   $\im (I_n-zA)^{-1}\ga \subset \sX_\Phi$, and thus $(D_{\tht}^*C_\Phi+B_{\tht}^*Q_\Phi A^*)(I_n-zA)^{-1}\ga$ is identically zero.
 Taking the adjoint shows that the term in \eqref{2ndterm} is zero.   Summarizing we see that \eqref{realF} is proved.

It remains to prove \eqref{idRF}. To do this note that $T_\Phi^* T_\tht= T_{\Phi^*\tht}=T_F$. It follows that
\begin{align}
T_R&=T_{\Phi^*\Phi}=T_\Phi^*T_\Phi=T_\Phi^* T_\tht T_\tht^*T_\Phi + T_\Phi^*(I- T_\tht T_\tht^*)T_\Phi  \label{eqTR1a}\\[.2cm]
&=T_F T_F^* + T_\Phi^*(I- T_\tht T_\tht^*)T_\Phi. \label{eqTR1b}
\end{align}
Recall that  $\sM_\Phi=\kr T_\tht^*$, and hence $P_{ \sM_\Phi}$ is the orthogonal projection on  $\ell_+^2(\BC^r)$ mapping  $\ell_+^2(\BC^r)$ onto $\kr T_\tht^*$. Since $\tht$ is inner, $T_\tht$ is an isometry, and hence the   orthogonal projection on  $\ell_+^2(\BC^r)$ mapping  $\ell_+^2(\BC^r)$ onto $\kr T_\tht^*$ is equal to $I-T_\tht T_\tht^*$, that is,
\begin{equation}
\label{ProjMPhi}
P_{\sM_\Phi}=I-T_\tht T_\tht^*.
\end{equation}
The latter identity, together with \eqref{eqTR1b} and \eqref{proj1}, shows that
\begin{equation}
\label{fundeq3}
T_R= T_F  T_F^*+T_\Phi^*P_{\sM_\Phi}T_\Phi=T_F  T_F^*+T_\Phi^*W_{\Phi,\, obs}{\de} W_{\Phi,\, obs}^*T_\Phi.
\end{equation}
According to the definition of ${\de}$ in the second part of  \eqref{proj1} the operator ${\de}$ maps $\sX_\Phi$ into itself and is zero on $\BC^n\ominus \sX_\Phi$. But then \eqref{defW} tells us that  $T_\Phi^*W_{\Phi,\, obs}{\de} W_{\Phi,\, obs}^*T_\Phi=W_{obs}{\de} W_{obs}^*$ which completes the proof of  \eqref{idRF}.
\end{proof}

Let $F=\Phi^*\tht$ be the rational matrix function defined in the preceding proposition. Since $\tht$  is 2-sided inner,   $\tht\tht^*=\tht^*\tht$ is identically equal to the $r\ts r$ identity matrix. It follows that
\begin{equation}
\label{DSS1}
R=\Phi^*\Phi=\Phi^*\tht\tht^*\Phi=FF^* \ands \Phi=\tht F^*.
\end{equation}
The first identity in \eqref{DSS1} shows that  $F$ appears as left spectral factor of $R$. The  second  identity tells  us that $F$ and $\tht$ appear as the factors in a Douglas-Shapiro-Shields factorization of $\Phi$.

Recall, e.g., from \cite{DSS71} or Sections 4.7 and 4.8 in \cite{FB10}, that a {\em Douglas-Shapiro-Shields (DSS) factorization} of a function $\Phi\in H^\iy_{r\ts m}$ is a factorization $\Phi=\tht F^*$ with $\tht$ a two-sided inner function in $H^\iy_{r \ts r}$ and $F$ a function in $H^\iy_{m\ts r}$. A DSS factorization
$\Phi=\tht_c F_c^*$ of $\Phi$ is called {\em canonical} if the only common right inner factor between $\tht_c$ and $F_c$ is a unitary constant $r\ts r$ matrix.   Moreover, any DSS factorization
$\Phi = \Theta F^*$ admits a decomposition of the form $\Theta = \Theta_c \Theta_1$ and $F = F_c \Theta_1$ where $\Phi=\tht_c F_c^*$
is the canonical factorization and $\Theta_1$ is an inner function.
Finally, it is noted that $\Phi = \Theta F^*$  is a canonical factorization if and
only if  $\im H_\Phi=\kr T_\tht^*$.

Now let $\Phi = \Theta F^*$ be our DSS factorization where $\kr  T_\Theta^* = W_{\Phi,obs} \sX_\Phi$.
Then we have
\begin{align*}
W_{\Phi,obs}\sX_\Phi  &= \kr  T_\Theta^* = \kr T_{\Theta_c \Theta_1}^* =
 \kr T_{\Theta_c}^* \oplus (T_{\Theta_c} \kr T_{\Theta_1}^*)\\
& = \im H_\Phi \oplus (T_{\Theta_c} \kr T_{\Theta_1}^*) \supset\im H_\Phi .
\end{align*}
Hence $W_{\Phi,obs}\sX_\Phi = \im H_\Phi$ if and only if $\kr T_{\Theta_1}^* = \{0\}$,
or equivalently, $\Theta_1$ is a unitary constant. In other words, $W_{\Phi,obs}\sX_\Phi = \im H_\Phi$
if and only if $\Phi = \Theta F^*$  is a canonical factorization.
If   the pair $\{C,A\}$ is observable, then $W_{\Phi,obs}$  is one to one. In this case,  the dimension of $\sX_\Phi$ equals
the rank of $H_\Phi$ (or equivalently the McMillan degree of $\Phi$) if and only if $\Phi = \Theta F^*$  is a canonical factorization.
Thus Lemma \ref{L:DSSprep} yields the following result.

\begin{cor}
Let $R$ be as in \eqref{defR} with $\{C,A\}$ observable and $R(\z)\geq 0$ for each $\z\in \BT$.
Then the DSS factorization $\Phi=\tht F^*$ of the outer spectral factor $\Phi$ of $R$, with $\tht$ and $F$ as in \textup{Propositions \ref{propR2}} and \textup{\ref{propR3}}, respectively, is canonical if and only if
\[
\sX_\Phi=\vee_{\nu\geq 0} A^\nu \ga \BC^m.
\]
In particular, $\Phi=\tht F^*$ is canonical in case the pair $\{A,\ga\}$ is controllable.
\end{cor}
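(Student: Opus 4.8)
The plan is to chain together two equivalences, both of which are essentially already in hand, so that almost no new computation is required. First I would recall that, by the second identity in \eqref{DSS1}, the functions $\tht$ and $F$ produced in Propositions \ref{propR2} and \ref{propR3} do form a DSS factorization $\Phi=\tht F^*$, in which $\tht$ is precisely the two-sided inner function determined by $\sM_\Phi$; by Lemma \ref{L:MPhi} we then have $\kr T_\tht^*=\sM_\Phi=W_{\Phi,\,obs}\sX_\Phi$. Next I would invoke the general characterization of canonicity recorded just before the statement: a DSS factorization $\Phi=\tht F^*$ is canonical if and only if $\im H_\Phi=\kr T_\tht^*$.

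With these two facts fixed, the core of the argument is a single application of Lemma \ref{L:DSSprep}, which asserts exactly that (under the standing hypothesis that $\{C,A\}$ is observable, needed so that $W_{\Phi,\,obs}$ is one-to-one on $\sX_\Phi$ via Corollary \ref{C:obscon}) one has $\im H_\Phi=\kr T_\tht^*$ if and only if $\sX_\Phi=\bigvee_{\nu\geq 0}A^\nu\ga\BC^m$. Composing the canonicity criterion with this equivalence immediately yields the desired statement: $\Phi=\tht F^*$ is canonical if and only if $\sX_\Phi=\bigvee_{\nu\geq 0}A^\nu\ga\BC^m$.

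For the final ``in particular'' clause I would argue as follows. Controllability of the pair $\{A,\ga\}$ is equivalent to $\bigvee_{\nu\geq 0}A^\nu\ga\BC^m=\BC^n$. On the other hand, Theorem \ref{thmR1} supplies the inclusions $\bigvee_{\nu\geq 0}A^\nu\ga\BC^m\subset\sX_\Phi\subset\BC^n$. Controllability therefore squeezes these to equalities, forcing $\sX_\Phi=\BC^n=\bigvee_{\nu\geq 0}A^\nu\ga\BC^m$, and canonicity follows from the equivalence just established.

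I do not expect a genuine obstacle here: the substantive work has been carried out in Lemma \ref{L:DSSprep} and in the preliminary discussion of DSS factorizations, so the Corollary is really a bookkeeping assembly of those results. The only point that requires care is to ensure that the observability hypothesis on $\{C,A\}$ is in force whenever Lemma \ref{L:DSSprep} (and through it Corollary \ref{C:obscon}) is applied, since that is where the one-to-one property of $W_{\Phi,\,obs}$ on $\sX_\Phi$ enters, and hence where the identification $\kr T_\tht^*=W_{\Phi,\,obs}\sX_\Phi$ is legitimately tied back to the geometry of $\sX_\Phi$.
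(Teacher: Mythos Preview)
Your proposal is correct and follows essentially the same route as the paper: the paper records the criterion that $\Phi=\tht F^*$ is canonical if and only if $\im H_\Phi=\kr T_\tht^*$, and then explicitly says ``Thus Lemma \ref{L:DSSprep} yields the following result,'' which is exactly the chain of equivalences you assemble. Your handling of the ``in particular'' clause via the inclusions from Theorem \ref{thmR1} is likewise in line with the paper (cf.\ Corollary \ref{C:obscon}\,(ii)).
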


We conclude this section with an observation that will be useful in the next section, and which is still valid at the level of generality considered in the present section.

\begin{lem}\label{L:declem}
Set $\sM=\sM_\Phi=W_{\Phi,\, obs}\sX_\Phi$ and $\sN=\im W_{obs}$, and consider the orthogonal direct sum decompositions
\begin{equation}
\label{decom1}
\ell_2^+(\BC^r)=\sM\oplus  \sM^\perp \ands \ell_2^+(\BC^m)=\sN\oplus \sN^\perp.
\end{equation}
Then, with respect to these decompositions, $T_\Phi^*$ has a $2\times 2$ matrix representation of the form
\[
T_\Phi^*= \begin{bmatrix}
T_{11}^* & T_{21}^* \\[.2cm]
0    & T_{22}^*
\end{bmatrix}:
\begin{bmatrix} \sM \\[.2cm] \sM^\perp \end{bmatrix}
 \to
 \begin{bmatrix}\sN \\[.2cm] \sN^\perp \end{bmatrix}
\]
with $T_{22}^*$ one-to-one. Furthermore, the function $F\in H^\iy_{m\ts r}$ defined in \textup{Proposition \ref{propR3}} satisfies
\begin{align}
T_FT_F^*&=\begin{bmatrix} I_{\sN}   &T_{21}^*\\[.2cm] 0   &T_{22}^*\end{bmatrix}
\begin{bmatrix} 0 &0\\[.2cm] 0& I_{\sM^\perp}\end{bmatrix}
\begin{bmatrix} I_{\sN}  &0\\[.2cm]  T_{21}   & T_{22}\end{bmatrix}\label{decomTR1b},
\end{align}
and the third factor in the right hand side of \eqref{decomTR1b}, that is,
\begin{equation}
\label{outerprop1}
\begin{bmatrix} I_{\sN}   &0\\[.2cm]  T_{21}   & T_{22}\end{bmatrix}:
 \begin{bmatrix}\sN \\[.2cm] \sN^\perp \end{bmatrix}
 \to
\begin{bmatrix}\sN \\[.2cm] \sM^\perp \end{bmatrix}
\end{equation}
has dense range. Here $I_{\sN}$ and $I_{\sM^\perp}$ denote the identity operators on $\sN$, respectively $\sM^\perp$.
\end{lem}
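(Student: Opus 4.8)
The plan is to set up the $2\times2$ block structure of $T_\Phi^*$ with respect to the decompositions in \eqref{decom1}, and then read off the remaining claims from the identity \eqref{idRF} proved in Proposition \ref{propR3}. First I would verify the block triangular form. The $(2,1)$ entry of $T_\Phi^*$ (the compression $P_{\sN^\perp}T_\Phi^*|_\sM$) must vanish: by the definition \eqref{defM2} of $\sM=\sM_\Phi$, every $f\in\sM$ satisfies $T_\Phi^*f\in\im W_{obs}=\sN$, so $P_{\sN^\perp}T_\Phi^*f=0$. This gives the zero in the lower-left corner. For the injectivity of $T_{22}^*=P_{\sN^\perp}T_\Phi^*|_{\sM^\perp}$: suppose $f\in\sM^\perp$ with $T_{22}^*f=0$, i.e.\ $T_\Phi^*f\in\sN=\im W_{obs}$; then by the very definition \eqref{defM2} we would have $f\in\sM$, forcing $f\in\sM\cap\sM^\perp=\{0\}$. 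Here I am implicitly using that $T_\Phi^*$ is one-to-one (recall $\Phi$ is outer, so $\im T_\Phi$ is dense and $T_\Phi^*$ is injective), which is what lets the defining condition $T_\Phi^*f\in\im W_{obs}$ be read as membership in $\sM$.

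Next I would derive the factorization \eqref{decomTR1b}. Starting from \eqref{idRF}, namely $T_R=T_FT_F^*+W_{obs}\de\, W_{obs}^*$, and $T_R=T_\Phi^*T_\Phi$, the natural route is to compute $T_FT_F^*=T_\Phi^*T_\tht T_\tht^* T_\Phi$ as in the proof of Proposition \ref{propR3}, where $T_\tht T_\tht^*=I-P_{\sM_\Phi}$ is the orthogonal projection onto $\sM^\perp$ (since $\tht$ is inner and $\kr T_\tht^*=\sM_\Phi$). Thus $T_FT_F^*=T_\Phi^*P_{\sM^\perp}T_\Phi$. Writing $T_\Phi$ in the block form adjoint to the one for $T_\Phi^*$, and sandwiching the projection $\mat{cc}{0&0\\0&I_{\sM^\perp}}$, the product telescopes into exactly the three-factor expression in \eqref{decomTR1b}. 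The appearance of $I_\sN$ rather than $T_{11}^*$ in the outer factors is the one point requiring care: I expect it comes from the fact that the middle projection annihilates the first block column/row, so only $T_{21}^*$ and $T_{22}^*$ survive, and the remaining diagonal block collapses to the identity on $\sN$ because the $\sN$-to-$\sN$ contribution to $T_R$ on that subspace matches the $W_{obs}\de W_{obs}^*$ term. I would confirm this by expanding the block product directly and comparing the $(1,1)$ entries.

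Finally, for the dense-range claim \eqref{outerprop1}, I would argue that the operator $\mat{cc}{I_\sN&0\\T_{21}&T_{22}}$ has dense range because its range contains $\sN\oplus\{0\}$ (from the $I_\sN$ block) and, modulo $\sN$, the closure of $\im T_{22}$ fills out $\sM^\perp$. Denseness of $\im T_{22}$ is equivalent to injectivity of $T_{22}^*$, already established above; combined with the identity on the first coordinate, the column operator has dense range. The main obstacle I anticipate is pinning down precisely why the outer factors carry $I_\sN$ in the top-left rather than the compression $T_{11}^*$ of $T_\Phi^*$ to $\sN$; this is a genuine structural point (not merely bookkeeping) and I would resolve it by carefully tracking how the middle projection onto $\sM^\perp$ interacts with the block triangular form, likely using that $T_{11}^*=P_\sN T_\Phi^*|_\sM$ combines with the injectivity of $T_\Phi^*$ and the Stein-equation identity behind \eqref{idRF} to force the $\sN$-diagonal entry to normalize to the identity.
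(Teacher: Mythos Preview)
Your approach is essentially the same as the paper's and is correct in outline, but you are overcomplicating the one point you flag as the ``main obstacle.'' The appearance of $I_\sN$ (rather than $T_{11}^*$) in the outer factors is a triviality, not a structural fact requiring \eqref{idRF} or any Stein-type identity. Once you have $T_FT_F^*=T_\Phi^*(I-P_\sM)T_\Phi$ and the block form of $T_\Phi^*$, a direct multiplication gives
\[
T_FT_F^*=\begin{bmatrix}T_{21}^*T_{21}&T_{21}^*T_{22}\\ T_{22}^*T_{21}&T_{22}^*T_{22}\end{bmatrix}
=\begin{bmatrix}T_{21}^*\\ T_{22}^*\end{bmatrix}\begin{bmatrix}T_{21}&T_{22}\end{bmatrix}.
\]
Now observe that for \emph{any} operator $S:\sM\to\sN$ one has
\[
\begin{bmatrix}S^*&T_{21}^*\\ 0&T_{22}^*\end{bmatrix}\begin{bmatrix}0&0\\0&I_{\sM^\perp}\end{bmatrix}\begin{bmatrix}S&0\\ T_{21}&T_{22}\end{bmatrix}
=\begin{bmatrix}T_{21}^*\\ T_{22}^*\end{bmatrix}\begin{bmatrix}T_{21}&T_{22}\end{bmatrix},
\]
because the middle projection annihilates the first block column of the left factor and the first block row of the right factor. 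The paper simply chooses $S=I_\sN$ so that the right factor $\begin{bmatrix}I_\sN&0\\ T_{21}&T_{22}\end{bmatrix}$ has dense range; with $S=T_{11}$ the identity is equally valid but the dense-range conclusion would not be immediate. So there is nothing to ``normalize'': the upper-left entry is a free choice, and $I_\sN$ is selected for the subsequent application. Your first instinct (``the middle projection annihilates the first block column/row'') was already the complete explanation.

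Two minor remarks. First, for the injectivity of $T_{22}^*$ you do not actually need the injectivity of $T_\Phi^*$: the implication $T_\Phi^*f\in\sN\Rightarrow f\in\sM$ is literally the definition \eqref{defM2}, with no appeal to outerness. Second, your dense-range argument is fine and matches the paper's one-line deduction from the injectivity of $T_{22}^*$.
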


\begin{proof}[\bf Proof.]
The form of the $2\ts 2$ matrix representation of $T_\Phi^*$ is obvious from the fact that $T_\Phi^*$ maps $\sM$ into $\sN$. The formula for $T_FT_F^*$ follows from
\begin{align*}
T_FT_F^*&=T_\Phi^*(I -P_{\sM})T_\Phi=
\begin{bmatrix} T_{21}^*T_{21}   &T_{21}^*T_{22}\\[.2cm] T_{22}^*T_{21}   &T_{22}^*T_{22}\end{bmatrix}\\[.2cm]
&=\begin{bmatrix} T_{21} ^*\\ T_{22}^*\end{bmatrix}\begin{bmatrix} T_{21} & T_{22} \end{bmatrix}\\[.2cm]
&= \begin{bmatrix} I_{\sN}   &T_{21}^*\\[.2cm] 0   &T_{22}^*\end{bmatrix}
\begin{bmatrix} 0 &0\\[.2cm] 0& I_{\sM^\perp}\end{bmatrix}
\begin{bmatrix} I_{\sN}  &0\\[.2cm]  T_{21}   & T_{22}\end{bmatrix}.
\end{align*}
We prove that $T_{22}^*$ is one-to-one. Note that the fact the \eqref{outerprop1} has dense range is a direct consequence of this. To see that $T_{22}^*$ is one-to-one, take $f\in \sM^\perp$ and assume that $T_{22}^*f=0$. This implies that  $T_\Phi^*f\in \sN=\im W_{obs}$. But then \eqref{defM2} tells us that $f\in \sM_\Phi=\sM$. Thus $f\in \sM\cap \sM^\perp$, and $f$ must be zero. Therefore  $T_{22}^*$ is one-to-one, as claimed.
\end{proof}

\setcounter{equation}{0}
\section{Proof of Theorem \ref{mainthm1}}\label{secProof1}

Throughout this section  $G$ and $K$  are stable rational matrix functions, $G\in {\fR}H^\iy_{m\ts p}$ and $K\in {\fR}H^\iy_{m\ts q}$, and we assume that  $\begin{bmatrix}G(z) & K(z) \end{bmatrix}$ is given by a stable  state space realization of the following form:
\begin{equation}
\label{reprGK2}
\begin{bmatrix}
  G(z)  & K(z)
\end{bmatrix}
 = \begin{bmatrix}
  D_1   & D_2
\end{bmatrix}  + z C(I_n -  z A)^{-1}\begin{bmatrix}
  B_1   & B_2
\end{bmatrix}.
\end{equation}
In particular, $A$ is a stable matrix. Note that \eqref{reprGK2} is equivalent to the following two realizations:
\begin{align}
G(z) &=  D_1  + z C(I_n -  z A)^{-1}B_1,\label{reprG1} \\[.2cm]
K(z) &=  D_2  + z C(I_n -  z A)^{-1}B_2.\label{reprK1}
\end{align}
The following lemma will allow us to apply the results of the previous section.

\begin{lem}\label{lemeqR}
Let $G\in {\fR}H^\iy_{m\ts p}$ and $K\in {\fR}H^\iy_{m\ts q}$, and put
\begin{equation}
\label{RGK}
R(z)=G(z)G^*(z)  -K(z)K^*(z) .
\end{equation}
Assume \eqref{reprG1} and \eqref{reprK1} are  stable realizations, and  let $P_j$ be the controllability Gramian for the pair $\{A,B_j\}$ for $j=1,2$. Then $R$ is an ${m\ts m}$ rational matrix function with no pole on $\BT$,   and  $R$  admits the following state space realizations:
\begin{equation}\label{defR2}
R(z) = zC(I_n - zA)^{-1}\ga +
R_0 + \ga^*(zI_n - A^*)^{-1}C^*,
\end{equation}
where
\begin{align}
R_0 &= D_1D_1^* -D_2D_2^* +C(P_1 - P_2)C^*,\label{defR0}\\[.2cm]
 \Gamma & = B_1D_1^* -B_2D_2^* + A(P_1 - P_2)C^*.\label{defga}
\end{align}
\end{lem}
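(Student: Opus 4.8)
The plan is to compute the state space realization of $R(z) = G(z)G^*(z) - K(z)K^*(z)$ directly from the given realizations of $G$ and $K$, and then read off the matrices $R_0$ and $\ga$ by matching terms. First I would recall that for a stable realization $G(z) = D_1 + zC(I_n - zA)^{-1}B_1$, the conjugate-type function $G^*(z) = G(\bar z^{-1})^* = D_1^* + B_1^*(z^{-1}I_n - A^*)^{-1}C^*$, which after clearing denominators becomes $G^*(z) = D_1^* + B_1^*(I_n - \bar z^{-1}\cdots)$; more usefully, I would write $G^*(z) = D_1^* + B_1^*(zI_n - A^*)^{-1}z C^*$ arranged so that it is analytic at infinity with the pole structure at $A^*$. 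The product $G(z)G^*(z)$ then has three kinds of terms: a constant piece $D_1D_1^*$, an analytic piece $zC(I_n-zA)^{-1}(\cdots)$ with poles inside $\BD$, and a co-analytic piece $(\cdots)(zI_n - A^*)^{-1}C^*$ with poles outside $\ol{\BD}$.

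The key algebraic step is to handle the \emph{mixed} term $zC(I_n-zA)^{-1}B_1 \cdot B_1^*(zI_n-A^*)^{-1}C^*$, which at first glance has poles both inside and outside $\BT$. The standard device is the partial-fraction / Stein-equation identity: one writes
\[
(I_n - zA)^{-1}B_1B_1^*(zI_n - A^*)^{-1}
\]
and uses the controllability Gramian $P_1 = AP_1A^* + B_1B_1^*$ to split this product into a sum of a term analytic inside $\BD$ and a term analytic outside, plus a constant. Concretely, the resolvent identity together with $B_1B_1^* = P_1 - AP_1A^*$ lets one verify that the mixed term contributes $C P_1 C^*$ to the constant part, a term of the form $z C(I_n-zA)^{-1}AP_1C^*$ to the analytic part, and the adjoint of this to the co-analytic part. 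I expect this decomposition of the mixed term to be the main obstacle, since it requires the right bookkeeping with the Stein equation rather than a direct computation; the cleanest route is to guess that $R$ has the claimed form \eqref{defR2} and then verify the identity by multiplying out both sides against $(I_n - zA)$ and $(zI_n - A^*)$ to reduce everything to polynomial identities in $z$.

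Once the mixed term is handled for $G$, the identical computation applies to $K$ with $B_2$, $D_2$, $P_2$ in place of $B_1$, $D_1$, $P_1$, and subtracting gives the stated formulas. Collecting the constant terms yields
\[
R_0 = D_1D_1^* - D_2D_2^* + C(P_1 - P_2)C^*,
\]
and collecting the coefficient of the analytic factor $zC(I_n-zA)^{-1}$ yields
\[
\ga = B_1D_1^* - B_2D_2^* + A(P_1 - P_2)C^*,
\]
while the co-analytic part appears automatically as $\ga^*(zI_n - A^*)^{-1}C^*$ by the symmetry $R^*(z) = R(z)$ built into $R = GG^* - KK^*$ being hermitian on $\BT$. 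Finally, since $A$ is stable all eigenvalues of $A$ and $A^*$ lie off $\BT$, so $R$ has no pole on the unit circle, completing the claim. I would close by noting the consistency check that the realization \eqref{defR2} is indeed of the hermitian form \eqref{defR} required to invoke Theorem \ref{thmR1} in the sequel, so that the size $m\ts m$ and the absence of poles on $\BT$ follow directly from the stability of $A$.
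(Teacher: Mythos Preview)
Your proposal is correct and follows essentially the same approach as the paper: the paper simply invokes Lemma~3.1 of \cite{FKR2a-10} to obtain the realization of $G(z)G^*(z)$ (and likewise for $KK^*$) and then subtracts, whereas you sketch the direct proof of that lemma via the Stein equation $P_j - AP_jA^* = B_jB_j^*$ to split the mixed resolvent term. One minor slip: your formula for $G^*(z)$ carries an extra factor of $z$; the correct expression is $G^*(z) = D_1^* + B_1^*(zI_n - A^*)^{-1}C^*$, but this does not affect the argument.
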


\begin{proof}[\bf Proof]
The result follows from Lemma 3.1 in \cite{FKR2a-10}. Indeed,  applying the latter lemma to $GG^*$  yields
\begin{align*}
G(z)G^*(z)& =z C(I_n-zA)^{-1}\Gamma_1+D_1 D_1^* + C P_1 C^*  \\
&\hspace{1cm}+ \ga_1^*(zI_n-A^*)^{-1}C^*, \ \mbox{where  {$\ga_1= D_1B_1^*+  A P_1 C^*$}}.
\end{align*}
In a similar way  one obtains
\begin{align*}
K(z)K^*(z)& =z C(I_n-zA)^{-1}\Gamma_2+D_2 D_2^* + C P_2 C^* +    \\
&\hspace{1cm}+\ga_2^*(zI_n-A^*)^{-1}C^* \ \mbox{where  $\ga_2 = B_2 D_2^* + A P_2 C^*$}.
\end{align*}
Together these two realizations yield \eqref{defR2}.
\end{proof}

\begin{thm}\label{thabs} Let \eqref{reprGK2} be a  realization  of $\begin{bmatrix} G & K \\ \end{bmatrix}$ which is  minimal, that is, both observable and controllable. Then $T_G T_G^* - T_K T_K^*$ is nonnegative if and only if the following two  conditions hold:
\begin{itemize}
\item[\textup{(i)}] The rational matrix function $R$ defined by \eqref{RGK} has nonnegative values on $\BT$ or, equivalently,   $R$  has an outer spectral factor $\Phi$  belonging to  ${\fR}H_{r\ts m}^\infty$ for some $r\leq m$.
\item[\textup{(ii)}] The operator  ${\de}+P_2-P_1$ is nonnegative. Here  $P_1$ and $P_2$ are  the controllability Gramians corresponding to the pairs $\{A,B_1\}$ and  $\{A,B_2\}$, respectively, and ${\de} $ is the linear map defined by the second part of  \eqref{proj1}.
\end{itemize}
In the special case when the function $R$ defined by \eqref{RGK} is identically zero item $(i)$ is automatically fulfilled $($with $r=0)$  and item $(ii)$ holds with $\de=0$.
\end{thm}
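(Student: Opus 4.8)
The plan is to derive a single operator identity and read the equivalence off from it. First I would show that
\[
T_GT_G^* - T_KT_K^* = T_FT_F^* + W_{obs}(\de + P_2 - P_1)W_{obs}^*,
\]
where $F$ and $\de$ are as produced in Section \ref{specfact}. For this I combine the standard Toeplitz--Hankel relation $T_{GG^*}=T_GT_G^*+H_GH_G^*$ (and its analogue for $K$) with the identities $H_GH_G^*=W_{obs}P_1W_{obs}^*$ and $H_KH_K^*=W_{obs}P_2W_{obs}^*$ recorded in the notation section, together with $T_R=T_{GG^*}-T_{KK^*}$, to obtain $T_GT_G^*-T_KT_K^*=T_R+W_{obs}(P_2-P_1)W_{obs}^*$. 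Substituting \eqref{idRF}, i.e.\ $T_R=T_FT_F^*+W_{obs}\de W_{obs}^*$, then yields the displayed identity. Note that this step, hence the very existence of $F$ and $\de$, presupposes condition (i).

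The easy direction is then immediate: if (i) and (ii) hold, then $F$ exists and the identity applies, and since $T_FT_F^*\geq0$ while $W_{obs}(\de+P_2-P_1)W_{obs}^*\geq0$ by (ii), we conclude $T_GT_G^*-T_KT_K^*\geq0$. For the converse I first extract (i) from $T_GT_G^*-T_KT_K^*\geq0$. As recorded in the introduction, \eqref{poscond1} forces the kernel \eqref{poskern1} to be positive; evaluating it on the diagonal gives $L(\l,\l)\geq0$, and multiplying by $1-|\l|^2>0$ yields $G(\l)G(\l)^*-K(\l)K(\l)^*\geq0$ for all $\l\in\BD$. Letting $\l\to\z\in\BT$, which is legitimate since $G,K$ have no poles on $\BT$, gives $R(\z)\geq0$; the equivalent existence of a rational outer spectral factor is the standard spectral factorization (\cite[Section 6.8]{RR85}). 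With (i) in force the displayed identity holds, and it remains to deduce $\de+P_2-P_1\geq0$ from positivity of its right-hand side.

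This last deduction is the crux and the main obstacle. Put $M=\de+P_2-P_1$ and fix $x_0\in\BC^n$. Observability makes $W_{obs}^*$ surjective onto $\BC^n$, so there is $h\in\ell_+^2(\BC^m)$ with $W_{obs}^*h=x_0$, and then
\[
\langle(T_GT_G^*-T_KT_K^*)h,h\rangle=\langle T_FT_F^*h,h\rangle+\langle Mx_0,x_0\rangle.
\]
By \eqref{decomTR1b} in Lemma \ref{L:declem}, $\langle T_FT_F^*h,h\rangle=\|T_{21}P_\sN h+T_{22}P_{\sN^\perp}h\|^2$ with $\sN=\im W_{obs}$. The component $P_\sN h$ is pinned down by $x_0$, since $W_{obs}^*$ is injective on $\sN$ by observability, whereas $P_{\sN^\perp}h$ may be chosen freely in $\sN^\perp$. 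The dense range of the operator \eqref{outerprop1} then forces $-T_{21}P_\sN h\in\overline{\im T_{22}}$, so the infimum of $\|T_{21}P_\sN h+T_{22}P_{\sN^\perp}h\|$ over admissible $h$ equals $0$. Choosing $h$ with $W_{obs}^*h=x_0$ so that $\|T_{21}P_\sN h+T_{22}P_{\sN^\perp}h\|\to0$, positivity of the left-hand side forces $\langle Mx_0,x_0\rangle\geq0$; as $x_0$ is arbitrary, $M\geq0$, which is (ii). I expect the careful separation of the pinned component $P_\sN h$ from the free component $P_{\sN^\perp}h$, and the exploitation of the density in Lemma \ref{L:declem}, to be the delicate part.

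Finally, the degenerate case $R\equiv0$ is handled directly: then $\Phi$, $F$ and $\de$ all vanish, the displayed identity collapses to $T_GT_G^*-T_KT_K^*=W_{obs}(P_2-P_1)W_{obs}^*$, and the same observability argument shows this operator is nonnegative if and only if $P_2-P_1\geq0$, that is, (ii) with $\de=0$.
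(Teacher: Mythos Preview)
Your argument is correct and follows essentially the same path as the paper: you establish the identity $T_GT_G^*-T_KT_K^*=T_FT_F^*+W_{obs}(\de+P_2-P_1)W_{obs}^*$ via the Toeplitz--Hankel relation and \eqref{idRF}, extract condition (i) from positivity, and then use the structure of $T_FT_F^*$ from Lemma~\ref{L:declem} together with the density of $\im T_{22}$ and observability to obtain (ii). The only difference is packaging: the paper writes $T_GT_G^*-T_KT_K^*$ as a congruence $V^*DV$ with $V=\begin{bmatrix} I_\sN & 0 \\ T_{21} & T_{22}\end{bmatrix}$ having dense range and $D=\begin{bmatrix} E_\sN^*W_{obs}(\de+P_2-P_1)W_{obs}^*E_\sN & 0 \\ 0 & I_{\sM^\perp}\end{bmatrix}$, which yields the full biconditional in one stroke, whereas you unpack the same congruence into an explicit minimization over the free component $P_{\sN^\perp}h$; the two are equivalent formulations of the same idea.
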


\begin{proof}[\bf Proof]
As noted in the introduction the condition $T_G T_G^* - T_K T_K^*$ is nonnegative  implies that the function $R$ defined  in \eqref{RGK} is nonnegative on $\BT$, or equivalently, $R$ has an outer spectral factor, $\Phi$ say, which belongs to  ${\fR}H_{r\ts m}^\infty$. Therefore in what follows  we shall assume that  condition (i) is fulfilled. Since we assume that  (i) holds, it remains to prove that $T_G T_G^* - T_K T_K^*\geq 0$  if and only if   ${\de}+ P_2-P_1\geq 0$.

A classical identity for Toeplitz and Hankel operators  (see, e.g.,  \cite[Proposition 2.14]{BS90}, or  \cite[Section XXIII.4]{GGK2}) yields
\begin{equation} \label{ToeplHankel}
 T_{GG^*} = T_G T_G^*+H_G H_G^* \ands   T_{KK^*} = T_K T_K^*+H_K  H_K^*.
\end{equation}
Since $R=GG^*-KK^*$, we have $T_R=  T_{GG^*}-T_{KK^*}$. Using the two identities in \eqref{ToeplHankel} we see that
\begin{equation} \label{fundid3a}
T_G T_G^* - T_K T_K^*=T_R-(H_G H_G^*-H_K H_K^*).
\end{equation}
Recall that $H_G H_G^*=W_{obs} P_1 W_{obs}^*$ and $H_K H_K^*=W_{obs} P_2 W_{obs}^*$. Thus \eqref{fundid3a} can be rewritten as
\begin{equation}
\label{fundid3ab}
T_G T_G^* - T_K T_K^*=T_R  +W_{obs}\big(P_2-P_1\big)W_{obs}^*.
\end{equation}

We shall first attend to the case where $R$ is identically zero. As observed in Remark \ref{Rem}, this is not a trivial case. When $R$ is identically zero, identity \eqref{fundid3ab} implies that $T_G T_G^* - T_K T_K^* \geq 0$ if and only if $W_{obs}\big(P_2-P_1\big)W_{obs}^*$ is nonnegative.  But $W_{obs}$ is one-to-one and hence $W_{obs}^*$ has dense range. It follows that
\[
W_{obs}\big(P_2-P_1\big)W_{obs}^*\geq 0\quad   \Longleftrightarrow \quad  P_2-P_1 \geq 0.
\]
This proves the theorem for the case when $R\equiv 0$.

Next assume  that $R$   is not identically zero.  This allows us to apply the results of the previous section. Using \eqref{idRF},  the identity   \eqref{fundid3ab} can be rewritten as
\begin{equation}
\label{fundid3b}
T_G T_G^* - T_K T_K^*=T_FT_F^* +W_{obs}\big({\de} +P_2-P_1\big)W_{obs}^*.
\end{equation}
Now put $\sN=\im W_{obs}\subset\ell^2_+(\BC^m)$, and set $\sN^\perp=\ell_2^+(\BC^m)\ominus \sN$, i.e., $\ell_2^+(\BC^m)=\sN\oplus \sN^\perp$. Let $E_{\sN}$ be the canonical embedding of $\sN$ into $\sN\oplus \sN^\perp$. Then
 \begin{align*}
    &W_{obs}\big({\de} +P_2-P_1\big)W_{obs}^*=    \\
    &\hspace{1cm}  = \begin{bmatrix}
    E_{\sN}^*W_{obs}\big({\de} +P_2-P_1\big)W_{obs}^*E_{\sN} &   0 \\
     0 &  0
\end{bmatrix}:  \begin{bmatrix}\sN \\[.2cm] \sN^\perp \end{bmatrix}\to  \begin{bmatrix}\sN \\[.2cm] \sN^\perp \end{bmatrix}.
\end{align*}
Using  the previous identity and  \eqref{decomTR1b} we obtain
\begin{align*}
&T_G T_G^* - T_K T_K^* =   \\
    &\hspace{1cm}=  \begin{bmatrix} I_{\sN}   &T_{21}^*\\[.2cm] 0   &T_{22}^*\end{bmatrix}
\begin{bmatrix}E_{\sN}^*W_{obs}\big({\de} +P_2-P_1\big)W_{obs}^*E_{\sN}^*&0\\[.2cm] 0&I_{\sM^\perp}\end{bmatrix}
\begin{bmatrix}  I_{\sN}    &0\\[.2cm]  T_{21}   & T_{22}\end{bmatrix}.
\end{align*}
By Lemma \ref{L:declem} the third factor on the right hand side has dense range, and consequently, the first factor on the right hand side has a trivial kernel. It follows that
\begin{align}
T_G T_G^* - T_K T_K^* \geq 0 &\Longleftrightarrow  W_{obs}\big({\de} +P_2-P_1\big)W_{obs}^*\geq 0\label{equiv1}\\[.2cm]
 &\Longleftrightarrow  {\de} +P_2-P_1\geq 0.\label{equiv2}
\end{align}
The second equivalence follows from the fact that   $W_{obs}^*$ has   dense range.  We conclude  (assuming item (i) holds)  that  the operator $T_G T_G^* - T_K T_K^*$ is nonnegative if and only if  item (ii) is satisfied.
\end{proof}

\begin{proof}[\bf Proof of Theorem \ref{mainthm1}]
Assume that  $\begin{bmatrix} G & K \\ \end{bmatrix}$  is given by \eqref{reprGK2} and that  the right hand side of  \eqref{reprGK2} is a minimal realization. Define $R$ by \eqref{defR2}, where $R_0$ and $\ga$ are given by \eqref{defR0} and \eqref{defga}, respectively. The fact that $T_GT_G^*-T_K T_K^*$ is nonnegative implies that $R$ admits an outer spectral factorization, $\Phi$ say, which belongs to ${\fR}H_{r\ts m}^\iy$. Using this $\Phi$, one constructs $\tht$ and $F$ as in Section \ref{specfact}. We claim that $F$ has the desired properties. Indeed, \eqref{realF} shows that $F$ is of the form \eqref{introF} with
\begin{equation}\label{B3D3}
B_3=B_\tht \ands D_3=\Phi(0)^* D_\Theta + \ga^*Q_\Phi B_\tht.
\end{equation}
Furthermore, by Corollary \ref{cormho2}, in this case the controllability Gramian of the pair $\{A, B_3\}=\{A, B_\tht\}$ is equal to the matrix ${\de}$ in \eqref{proj1}. In other words
\[
P_3={\de} \quad\mbox{and}\quad{\de}+P_2-P_1=P_3+P_2-P_1.
\]
But then \eqref{fundid3b} shows that item (i) in Theorem \ref{mainthm1} is satisfied. Finally, the fact that $T_GT_G^*-T_KT_K^*$ is nonnegative implies that  ${\de}+P_2-P_1=P_3+P_2-P_1$ is nonnegative, which proves item (ii).

It remains to prove the final statements in Theorem \ref{mainthm1}. From \eqref{fundid3b} it follows that
\[
\rank (T_G T_G^* - T_K T_K^*-T_FT_F^*)= \rank W_{obs}\big(P_3 +P_2-P_1\big)W_{obs}^*\leq n.
\]
Since $G$, $H$, and $F$ are rational matrix functions, the corresponding Hankel operators have finite rank. Hence, using
\[
T_{\{GG^*-KK^*-FF^*\}}=(T_G T_G^* - T_K T_K^*-T_FT_F^*)-(H_G H_G^* - H_K H_K^*-H_FH_F^*),
\]
it follows that $\rank (T_G T_G^* - T_K T_K^*-T_FT_F^*)$ is finite, implies that the rank of  the Toeplitz  operator $T_{GG^*-KK^*-FF^*}$ is finite. This can only happen when the function $GG^*-KK^*-FF^*$ is zero; cf., \cite[Theorem 3.2]{tH13}.
\end{proof}

\setcounter{equation}{0}
\section{Proof of Theorem \ref{mainthm2}}\label{secProof2}

Let $G\in {\fR}H_{m\ts p}^\iy$  and $K\in {\fR}H_{m\ts q}^\iy$ be stable rational matrix functions, and assume that  $\begin{bmatrix}G(z) & K(z) \end{bmatrix}$ is given by the   minimal   realization \eqref{reprGK1}.  Furthermore, assume that the positivity condition \eqref{poscond1} is satisfied.  Then,    by Theorem \ref{mainthm1} there exists {a} $F\in {\fR}H_{m\ts r}^\iy$, for some $r\leq m$, such that $F$ admits a realization of the form  \eqref{introF} and conditions (i), (ii) in Theorem \ref{mainthm1} are satisfied.

\begin{lem}\label{lempos2}
Condition  \textup{(i)} in \textup{Theorem \ref{mainthm1}} implies that \eqref{fundid1} holds.
\end{lem}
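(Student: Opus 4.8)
The plan is to start from item (i) in Theorem \ref{mainthm1}, namely the operator identity
\[
T_GT_G^*-T_KT_K^*-T_FT_F^*=W_{obs}(P_3+P_2-P_1)W_{obs}^*,
\]
and to translate this single Toeplitz/Hankel identity into the reproducing-kernel identity \eqref{fundid1}. The key observation is that \eqref{fundid1}, once the term $\la(\l)\la(z)^*$ is moved appropriately, is precisely the statement that a certain $\BC^m$-valued kernel on $\BD\times\BD$ built from $G$, $K$, $F$ and $\la$ factors through $W_{obs}$. The natural bridge between operators on $\ell^2_+$ and kernels on $\BD$ is evaluation against the reproducing vectors of the Hardy space: for $z\in\BD$ the vector $k_z=\col[\bar z^\nu I_m]_{\nu\ge0}\in\ell^2_+(\BC^m)$ satisfies $W_{obs}^*k_z=(I_n-\bar z A^*)^{-1}C^*$ (a geometric-series computation), and more importantly $T_\om^* k_z x = k_z\,\om(z)^* x$ for any $\om\in\fR H^\infty$, so that $\inn{T_\om T_\om^* k_z x}{k_\l y}=\frac{\inn{\om(\l)\om(z)^*x}{y}}{1-\l\bar z}$.

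First I would fix $z,\l\in\BD$ and $x,y\in\BC^m$ and compute the sesquilinear form $\inn{(T_GT_G^*-T_KT_K^*-T_FT_F^*)k_z x}{k_\l y}$ in two ways. Evaluating the left-hand side via the three Toeplitz identities just recalled gives
\[
\frac{\inn{\big(G(\l)G(z)^*-K(\l)K(z)^*-F(\l)F(z)^*\big)x}{y}}{1-\l\bar z}.
\]
Evaluating the right-hand side $W_{obs}(P_3+P_2-P_1)W_{obs}^*$ against the same vectors, using $W_{obs}^*k_z x=(I_n-\bar z A^*)^{-1}C^*x$ and recalling that $\la(z)=C(I_n-zA)^{-1}\Upsilon$ with $\Upsilon=(P_3+P_2-P_1)^{1/2}$, produces exactly
\[
\inn{\la(\l)\la(z)^*x}{y}=\inn{C(I_n-\l A)^{-1}(P_3+P_2-P_1)(I_n-\bar z A^*)^{-1}C^*x}{y}.
\]
Equating the two expressions and clearing the common scalar denominator $1-\l\bar z$ yields
\[
G(\l)G(z)^*-K(\l)K(z)^*-F(\l)F(z)^*=(1-\l\bar z)\,\la(\l)\la(z)^*,
\]
and rearranging the $(1-\l\bar z)\la(\l)\la(z)^*$ term gives precisely \eqref{fundid1} after moving $K(\l)K(z)^*$, $F(\l)F(z)^*$ and $\la(\l)\la(z)^*$ to the right and $\l\bar z\,\la(\l)\la(z)^*$ to the left.

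The main obstacle I anticipate is purely bookkeeping rather than conceptual: establishing the evaluation formula $\inn{T_\om T_\om^* k_z x}{k_\l y}=(1-\l\bar z)^{-1}\inn{\om(\l)\om(z)^*x}{y}$ cleanly, and likewise computing $W_{obs}^*k_z$, requires care with adjoints, with the convention $\om^*(z)=\om(\bar z^{-1})^*$ fixed in the introduction, and with the fact that these sesquilinear forms are being evaluated against the (unnormalized) Szeg\H{o}-type vectors $k_z$. I would need to confirm that the $k_z$ span a dense enough set that equality of the forms for all $z,\l\in\BD$ is genuinely equivalent to the operator identity, but since the identity \eqref{fundid1} is itself a pointwise kernel statement this density is exactly what is wanted. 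An alternative, perhaps even shorter, route would sidestep the operators entirely: since $G$, $K$, $F$ and $\la$ are all rational with the common realization data $\{C,A\}$, one may verify \eqref{fundid1} directly as an identity of rational functions in $(z,\l)$ by substituting the realizations and using the Stein equations $P_j=AP_jA^*+B_jB_j^*$ together with $P_3+P_2-P_1=\Upsilon^2$; this reduces \eqref{fundid1} to an algebraic identity that follows from the single relation encoded in item (i). I would present the operator-theoretic derivation as the primary argument, as it most directly exploits item (i) and requires the least new computation.
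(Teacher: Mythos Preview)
Your proposal is correct and essentially identical to the paper's own proof: the paper introduces the operators $\varphi_z=\begin{bmatrix}I&zI&z^2I&\cdots\end{bmatrix}^*$ (your $k_z$), uses $T_G^*\varphi_z=\varphi_z G(z)^*$, $\varphi_z^*W_{obs}=C(I_n-zA)^{-1}$ and $\varphi_\l^*\varphi_z=(1-\l\bar z)^{-1}I$, and compresses item~(i) of Theorem~\ref{mainthm1} against $\varphi_\l,\varphi_z$ to obtain \eqref{fundid1} after clearing the denominator, exactly as you outline.
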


\begin{proof}[\bf Proof]
For each $z$ in the  open unit disc $\mathbb{D}$, let $\varphi_z$ be the operator defined by
\begin{equation}\label{defvaz}
\va _z = \begin{bmatrix}   I_\upsilon & z I_\upsilon &  z^2 I_\upsilon &  \cdots  \end{bmatrix}^*:\BC^\upsilon
\to \ell_+^2(\BC^\upsilon).
\end{equation}
Here $\upsilon$ is an arbitrary positive integer, the value of which will be clear from the context.

Notice that
\begin{align*}
& T_G^* \va_z =\va_z G(z)^*, \quad T_K^* \va_z =\va_z K(z)^*, \quad T_F^* \va_z =\va_z F(z)^*, \\[.2cm]
&\hspace{2cm}\va_z^* W_{obs} = C(I - z A)^{-1},\quad \va_\l^*\va_z=\frac{1}{1-\l \bar{z}}I.
\end{align*}
It follows that  for each  $z$ and $\l$  in $\BD$ we have
\begin{align}
&\va _\l ^* (T_G T_G^* - T_K T_K^*- T_F T_F^*)\va_z=\nonumber \\[.2cm]
&\hspace{3cm} =\frac{G(\l )G(z)^*-K(\l )K(z)^*-F(\l )F(z)^*}{1- \lambda \bar{z}},\label{reproid}\\[.2cm]
& \va _\l ^*  W_{obs}(P_3+P_2-P_1)W_{obs}^*\va_z=\nonumber \\[.2cm]
&\hspace{3cm}=C( I_n - \lambda A)^{-1}(P_3+P_2-P_1)( I_n - \bar{z} A^*)^{-1}C^*.\nonumber
\end{align}
But then condition  (i) in  Theorem \ref{mainthm1}  implies that
\begin{align*}
&G(\l )G(z)^*-K(\l )K(z)^*-F(\l )F(z)^*=\\[.2cm]
&=(1-\l\bar{z})C( I_n - \lambda A)^{-1}(P_3+P_2-P_1)( I_n - \bar{z} A^*)^{-1}C^*  \quad (z, \l\in \BD).
\end{align*}
Recall that  $\la(z)=C(I_n-zA)^{-1}(P_3+P_2-P_1)^{1/2}$. Hence  the preceding identity is just the same as the identity  \eqref{fundid1}.
\end{proof}

\begin{proof}[\bf Proof of Theorem \ref{mainthm2}]
Recall that
\begin{align*}
G(z) &=  D_1  + z C(I_n -  z A)^{-1}B_1, \quad K(z) =  D_2  + z C(I_n -  z A)^{-1}B_2,  \\[.2cm]
F(z) &=  D_3  + z C(I_n -  z A)^{-1}B_3,   \\[.2cm]
\la(z)&=C(I_n-zA)^{-1}\Upsilon,\  \mbox{where $\Upsilon=(P_3+P_2-P_1)^{1/2}$}.
\end{align*}
Next put
\[
M(z) = \begin{bmatrix}  z \la(z) & G(z)  \end{bmatrix},\quad
 N(z) = \begin{bmatrix}    \la(z) & K(z)&    F(z)   \end{bmatrix}.
\]
Using the state space  realizations of $G$, $K$, $F$, and $\la$ given above we see   that $M$ and $N$ admit the following realizations:
\begin{align*}
M(z)&=
 \begin{bmatrix}  0 & D_1 \end{bmatrix} + z C(I_n - z A)^{-1}\begin{bmatrix}
 \Upsilon  & B_1 \end{bmatrix},\\[.2cm]
 N(z) &=   \begin{bmatrix}  C \Upsilon & D_2& D_3   \end{bmatrix}
 + z C (I_n-z A)^{-1}
 \begin{bmatrix} A \Upsilon  &  B_2 & B_3  \end{bmatrix}.
\end{align*}
Furthermore, the   identity  \eqref{fundid1} tells us that
\[
M(\l)M(z)^*= N(\l)N(z)^* \qquad (z, \l \in \BD).
\]
This allows us to apply Lemma \ref{lemrep} below.  It follows that  the linear operator mapping  $ \BC^n  \oplus \BC^q  \oplus \BC^r$ into $\BC^n \oplus \BC^p $ defined by \eqref{defU1} is a partial isometry and   $M(z) U = N(z)$.

Now partition $U$ as in \eqref{intropartiso1}.  Then $M(z) U = N(z)$ is equivalent to
\begin{align*}
&z\la(z)\a+G(z)\g=\la(z), \\
&z\la(z)\b_1+G(z)\d_1=K(z),  \\
&z\la(z)\b_2+G(z)\d_2=F(z).
\end{align*}
The first identity implies that $\la(z)=G(z)\g (I-z\a)^{-1}$.  Using this expressing for $\la(z)$ in the other two identities  yields
\begin{equation}
\label{idsGKF2}
G(z)\big(\d_1+z\g (I-z\a)^{-1}\big)\b_1=F(z) \ands G(z)\big(\d_2+z\g (I-z\a)^{-1}\big)\b_2.
\end{equation}
Since  $U$ is a contraction,  it follows from the bounded real lemma in systems theory or the Sz.-Nagy-Foias model theory in operator theory (see also Theorem 5.2 in \cite{Ando90})  that the matrix function   $\begin{bmatrix} X &\Psi \end{bmatrix}$, with $X$ and $\Psi$ defined as in \eqref{introX} and \eqref{introPsi}, respectively, satisfies $\|\begin{bmatrix} X &\Psi \end{bmatrix}\|_\iy\leq 1$, in particular, $X$  is a rational contractive function on $\BD$.  Furthermore, the first identity in  \eqref{idsGKF2}  implies that $X$ satisfies the Leech equation $GX=K$.  In the same way, using the second   identity in \eqref{idsGKF2}, one shows that the function $\Psi$ in \eqref{introPsi} has the desired properties.
\end{proof}

In the next lemma $M$ and $N$  are stable rational matrix functions, $M\in {\fR}H_{m\ts k}^\iy$ and $N\in {\fR}H_{m\ts \ell}^\iy$. We assume  that  $M$ and $N$ are given by the stable realizations:
\begin{align}
    M(z)&= D_M+zC(I_n-zA)^{-1}B_M,    \label{defM6}\\
     N(z)&= D_N+zC(I_n-zA)^{-1}B_N. \label{defN6}
\end{align}
In particular, $A$ is stable.

\begin{lem}\label{lemrep} Let $M\in {\fR}H_{m\ts k}^\iy$ and $N\in {\fR}H_{m\ts \ell}^\iy$ be given by the stable realizations \eqref{defM6} and \eqref{defN6}, respectively, and let $W_{obs}$ be the observability  operator defined by the pair  $\{C, A\}$. Put $Y=W_{obs}^*W_{obs}$. If
\begin{equation}
\label{condMN6a}
 {M(\l)M(z)^* = N(\l)N(z)^*}
\qquad (z, \l \in \mathbb{D}),
\end{equation}
then the $k \ts \ell$ matrix $U=(D_M^*D_M +B_M^*YB_M )^+(D_M^*D_N +B_M^*YB_N)$ is a partial isometry and    $M (z)U = N(z)$ for all $z$ in $\mathbb{D}$.
\end{lem}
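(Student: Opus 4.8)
The plan is to reduce the entire lemma to a statement about the single ``column'' operator built from the Taylor coefficients of $M$. Write $M(z)=\sum_{j=0}^\iy z^j M_j$ with $M_0=D_M$ and $M_j=CA^{j-1}B_M$ for $j\geq 1$, and set
\[
\Lambda_M=\col[M_j]_{j=0}^\iy=\begin{bmatrix} D_M \\ W_{obs}B_M \end{bmatrix}:\BC^k\to\ell_+^2(\BC^m),
\]
with $\Lambda_N:\BC^\ell\to\ell_+^2(\BC^m)$ defined analogously from $N$. Since $A$ is stable, $W_{obs}$ is bounded, so $\Lambda_M$ and $\Lambda_N$ are bounded with finite-dimensional (hence closed) range, and every Moore--Penrose inverse appearing below exists. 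First I would record the two Gram identities, which follow at once from $Y=W_{obs}^*W_{obs}=\sum_{\nu\geq0}(A^*)^\nu C^*CA^\nu$ by summing $M_j^*M_j$ and $M_j^*N_j$:
\[
\Lambda_M^*\Lambda_M=D_M^*D_M+B_M^*YB_M,\qquad \Lambda_M^*\Lambda_N=D_M^*D_N+B_M^*YB_N.
\]
Combined with the standard identity $\Lambda_M^+=(\Lambda_M^*\Lambda_M)^+\Lambda_M^*$, these show that the matrix $U$ in the statement is precisely $U=(\Lambda_M^*\Lambda_M)^+\Lambda_M^*\Lambda_N=\Lambda_M^+\Lambda_N$.

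Next I would translate the hypothesis \eqref{condMN6a}. Expanding $M(\l)M(z)^*=\sum_{j,k\geq0}\l^j\bar z^k M_jM_k^*$ and the analogous series for $N$, equality for all $z,\l\in\BD$ forces $M_jM_k^*=N_jN_k^*$ for all $j,k\geq0$, that is,
\[
\Lambda_M\Lambda_M^*=\Lambda_N\Lambda_N^*
\]
(equivalently one reads this off from $M(z)=\va_z^*\Lambda_M$ together with $\va_\l^*\va_z=(1-\l\bar z)^{-1}I$ and the density of $\{\va_z\mid z\in\BD\}$). Because the ranges are closed, $\im\Lambda_M=\im(\Lambda_M\Lambda_M^*)=\im(\Lambda_N\Lambda_N^*)=\im\Lambda_N$; in particular $\im\Lambda_N\subset\im\Lambda_M$, so
\[
\Lambda_M U=\Lambda_M\Lambda_M^+\Lambda_N=P_{\im\Lambda_M}\Lambda_N=\Lambda_N.
\]
Reading off the block rows gives $M_jU=N_j$ for every $j$, and hence $M(z)U=\sum_j z^jM_jU=\sum_j z^jN_j=N(z)$ for all $z\in\BD$, which is the desired intertwining.

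Finally, the partial-isometry claim is a short pseudo-inverse computation. Using $\Lambda_M^*(\Lambda_M\Lambda_M^*)^+=\Lambda_M^+$ (so that $(\Lambda_M^+)^*\Lambda_M^+=(\Lambda_M\Lambda_M^*)^+$) and then substituting $\Lambda_M\Lambda_M^*=\Lambda_N\Lambda_N^*$,
\[
U^*U=\Lambda_N^*(\Lambda_M^+)^*\Lambda_M^+\Lambda_N=\Lambda_N^*(\Lambda_M\Lambda_M^*)^+\Lambda_N=\Lambda_N^*(\Lambda_N\Lambda_N^*)^+\Lambda_N=\Lambda_N^+\Lambda_N=P_{\im\Lambda_N^*},
\]
which is an orthogonal projection, so $U$ is a partial isometry. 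The only point requiring genuine care---and thus the main obstacle---is the bookkeeping with Moore--Penrose inverses of operators whose codomain is the infinite-dimensional space $\ell_+^2(\BC^m)$; this is harmless precisely because all the relevant ranges are finite-dimensional and closed, and I would state the identities $\Lambda^+=(\Lambda^*\Lambda)^+\Lambda^*=\Lambda^*(\Lambda\Lambda^*)^+$ and the range equality $\im\Lambda=\im(\Lambda\Lambda^*)$ explicitly before invoking them.
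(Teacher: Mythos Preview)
Your proof is correct and follows essentially the same line as the paper's: both introduce the column operator $\Lambda_M=\begin{bmatrix}D_M\\ W_{obs}B_M\end{bmatrix}$ (the paper calls it $\omega_M$), translate the hypothesis into $\Lambda_M\Lambda_M^*=\Lambda_N\Lambda_N^*$, and read off $U=(\Lambda_M^*\Lambda_M)^+\Lambda_M^*\Lambda_N$. The only cosmetic differences are that the paper works directly with the evaluation vectors $\varphi_z$ and density rather than Taylor coefficients, and it asserts the existence of the partial isometry as a standard consequence of $\Lambda_M\Lambda_M^*=\Lambda_N\Lambda_N^*$, whereas you spell out the Moore--Penrose identities explicitly.
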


\begin{proof}[\bf Proof]
Let $\om_M$ and $\om_N$ be the operators defined by
\[
\om_M = \begin{bmatrix}   D_M \\ W_{obs}B_M  \end{bmatrix}:\BC^k \to\ell_+^2(\BC^m),
\quad
\om_N = \begin{bmatrix}   D_N \\ W_{obs}B_N  \end{bmatrix}:\BC^\ell  \to\ell_+^2(\BC^m).
\]
For each $z$ in the  open unit disc $\mathbb{D}$, let $\varphi_z$ be the operator defined by \eqref{defvaz}. Then $M(z)^* = \om_M^*  \varphi_z$ and $N(z)^* = \om_N^*  \varphi_z$ for all $z$ in  $\mathbb{D}$.  Thus for $\l $ and $z$ in $\BD$, with
$v$ and $w$ in $\BC^m$, we have
\begin{align*}
\lg\om_M \om_M^* \varphi_\l  w,  \varphi_z v\rg  &= \lg\om_M^* \varphi_\l  w,  \om_M^*\varphi_z v\rg  \\
&= \lg M(\l )^* w, M(z)^* v\rg  = \lg w, M(\l ) M(z)^* v\rg
\end{align*}
and
\begin{align*}
\lg\om_N \om_N^* \varphi_\l  w,  \varphi_z v\rg  &= \lg\om_N^* \varphi_\l  w,  \om_N^*\varphi_z v\rg  \\
&= \lg N(\l )^* w, N(z)^* v\rg  = \lg w, N(\l ) N(z)^* v\rg.
\end{align*}
Because $\{\varphi_z \mathbb{C}^m \mid z \in \mathbb{D}\}$ spans a dense
set in $\ell_+^2(\mathbb{C}^m)$, we see that condition \eqref{condMN6a} implies that
\begin{equation}
\label{condMN6b}
\om_M\om_M^*=\om_N\om_N^*.
\end{equation}
It follows that there exists a unique partial isometry $U$ mapping $\BC^\ell$ into  $\BC^k$ with initial space $\im \om_N^*$ and final space $\im \om_M$  such that
$\om_M U=\om_N$. In fact this unique isometry $U$ is given by   $(\om_M^*\om_M )^+\om_M^*\om_N$, where $(\om_M^*\om_M )^+$ stands for the Moore-Penrose inverse of the finite dimensional selfadjoint  operator $\om_M^*\om_M$.

Finally, using $Y=W_{obs}^*W_{obs}$ and the definitions  of $\om_M$ and $\om_N$ in the beginning of the proof, we obtain
\begin{align*}
\om_M^*\om_M&=
\begin{bmatrix} D_M^*&B_M^*W_{obs}^*\end{bmatrix}
 \begin{bmatrix}   D_M \\ W_{obs}B_M  \end{bmatrix}\\
& =D_M^*D_M +B_M^*W_{obs}^*W_{obs}B_M =D_M^*D_M +B_M^*YB_M.
\end{align*}
and
\begin{align*}
\om_M^*\om_N&=
\begin{bmatrix} D_M^*&B_M^*W_{obs}^*\end{bmatrix}
 \begin{bmatrix}   D_N \\ W_{obs}B_N  \end{bmatrix}\\
& =D_M^*D_N+B_M^*W_{obs}^*W_{obs}B_N=D_M^*D_N +B_M^*YB_N.
\end{align*}
Thus $U=(D_M^*D_M +B_M^*YB_M)^+(D_M^*D_N +B_M^*YB_N)$ as desired.
\end{proof}

\setcounter{equation}{0}
\section{The strictly positive case}\label{secposdef}

We begin by  specifying Theorem \ref{thmR1} for the case when the values of $R$  on the unit circle are strictly positive. If $\Xi$ is an invertible operator on a Hilbert space, then $\Xi^{-*}$ denoted the adjoint of $\Xi^{-1}$.

\begin{prop}\label{propinvTPhi} Let $R$ be as in \eqref{defR}. Assume that $R(\z)$  is strictly positive for each $\z\in \BT$, and let $\Phi \in {\fR}H_{r\ts m}^\infty$ be an outer spectral factor of $R$, as in \textup{Theorem \ref{thmR1}}. Then   $T_\Phi$ is invertible, and
\begin{equation}
\label{invertout}
\sX_\Phi = \BC^n, \quad  C_\Phi = E^* T_\Phi^{-*}W_{obs}, \quad
W_{\Phi, \,obs}=T_\Phi^{-*}W_{obs}.
\end{equation}
Here $E$ is the embedding of $\BC^r$ onto the first coordinate space of $\ell^2_+(\BC)$.
The observability Gramian $Q_\Phi$ determined
by the pair $\{C_\Phi, A\}$ is also given by
$Q_\Phi= W_{obs}^*T_R^{-1}W_{obs}$, the matrix  $R_0-\ga^*Q_\Phi\ga$ is strictly positive, and
\begin{equation}
\label{posCPhi}
C_\Phi=\Phi(0)(R_0-\ga^*Q_\Phi\ga)^{-1}(C-\ga^*Q_\Phi A).
\end{equation}
Finally, in this case, we may assume without loss of generality that $\Phi(0)$ is given by
\begin{equation}
\label{posPhi0}
\Phi(0)=(R_0-\ga^*Q_\Phi\ga)^{1/2}.
\end{equation}
\end{prop}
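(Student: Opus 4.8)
The plan is to exploit the only remaining freedom in the outer spectral factor, namely that it is unique up to multiplication on the left by a constant unitary matrix, in order to normalize $\Phi(0)$ to the positive square root of $N := R_0-\ga^*Q_\Phi\ga$. In other words, I would not prove that the given $\Phi$ already satisfies \eqref{posPhi0}, but rather show that $\Phi$ can be replaced by another outer spectral factor for which \eqref{posPhi0} holds, and that this replacement is consistent with the remaining assertions of the proposition.

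First I would record that identity \eqref{r0ss} of Theorem \ref{thmR1} gives $\Phi(0)^*\Phi(0)=R_0-\ga^*Q_\Phi\ga=N$, and that $N$ has already been shown to be strictly positive earlier in the present proposition. Since $T_\Phi$ is invertible (also established above), the outer spectral factor has $r=m$, so $\Phi(0)$ is a square $m\ts m$ matrix; combined with $\Phi(0)^*\Phi(0)=N$ being invertible, this forces $\Phi(0)$ to be invertible. Because $N$ is strictly positive, its positive definite square root $N^{1/2}$ is invertible as well, and I would set $U=\Phi(0)N^{-1/2}$. A direct computation gives $U^*U=N^{-1/2}\Phi(0)^*\Phi(0)N^{-1/2}=N^{-1/2}NN^{-1/2}=I_m$, and since $U$ is square and invertible it is unitary.

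Next I would replace $\Phi$ by $\wt\Phi:=U^*\Phi$. As $U^*$ is a constant unitary matrix, $\wt\Phi$ satisfies $\wt\Phi^*\wt\Phi=\Phi^*UU^*\Phi=\Phi^*\Phi=R$ and is again outer, hence is itself a legitimate outer spectral factor of $R$; this is precisely the uniqueness-up-to-left-unitary-constant statement recalled in Section \ref{specfact}, Step 1. Its value at the origin is $\wt\Phi(0)=U^*\Phi(0)=N^{-1/2}\Phi(0)^*\Phi(0)=N^{-1/2}N=N^{1/2}$, which is exactly \eqref{posPhi0}. The consistency check is then routine: $Q_\Phi=W_{obs}^*T_R^{-1}W_{obs}$ depends only on $R$ and not on the choice of spectral factor, so $N=R_0-\ga^*Q_\Phi\ga$ is unchanged under the substitution, and replacing $C_\Phi$ by $U^*C_\Phi$ leaves $Q_\Phi$ (and hence \eqref{posCPhi}) invariant. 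I expect the only point genuinely requiring care — the ``main obstacle'' — to be verifying that $\Phi(0)$ really is square and invertible, so that $U$ is honestly unitary rather than merely a coisometry; this is where strict positivity is indispensable, entering both through $r=m$ and through the invertibility of $N$.
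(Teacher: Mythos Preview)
Your argument is correct and coincides with the paper's: the paper phrases the same step as taking the polar decomposition $\Phi(0)=U(R_0-\ga^*Q_\Phi\ga)^{1/2}$ and replacing $\Phi$ by $U^{-1}\Phi$, which is exactly your explicit choice $U=\Phi(0)N^{-1/2}$. Your consistency check that $Q_\Phi$ (and hence \eqref{posCPhi}) is unaffected by the substitution is a welcome addition that the paper leaves implicit.
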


\begin{proof}[\bf Proof]
Since   $R(\z)$  is strictly positive for each $\z\in \BT$, the outer factor $\Phi$ is an invertible outer  {factor}, which is equivalent to   $T_\Phi$ being invertible. In particular,  $T_\Phi^*$ is surjective.  Thus for each $x\in \BC^n$ the vector $W_{obs}x$ belongs to $\im  T_\Phi^*$. This shows that the space $\sX_\Phi$ is equal to the full space $\BC^n$.  The two other identities in \eqref{invertout} then follow from \eqref{basicid}. Next, one computes that
\begin{align*}
Q_\Phi&= W_{\Phi, \,obs}^*W_{\Phi, \,obs}=W_{obs}^*T_\Phi^{-1}T_\Phi^{-*}W_{obs}  \\
    &=  W_{obs}^*(T_\Phi^{*}T_\Phi)^{-1}W_{obs}=W_{obs}^*T_R^{-1}W_{obs}.
\end{align*}
This proves    $Q_\Phi=W_{obs}^*T_R^{-1}W_{obs}$. Since $r=m$ and $\Phi(0)$ is invertible,  the identity    \eqref{r0ss}  shows that $R_0-\ga^*Q_\Phi\ga$ is strictly positive. Similarly, the identity \eqref{cstar}
{with \eqref{posPhi0}} yields \eqref{posCPhi}.

It remains to prove the final statement.  From \eqref{r0ss} and the fact that $\Phi(0)$ is
 invertible it follows that the polar decomposition of $\Phi(0)$ is
 given by  {$\Phi(0)=U(R_0-\ga^* Q_\Phi \ga)^{1/2}$}, where $U$ is unitary.
Recall that $\Phi$ is uniquely determined up to a unitary matrix from the left.
Thus without loss of generality we may replace $\Phi$ by $U^{-1}\Phi$, and then \eqref{posPhi0} holds.
\end{proof}

The results listed in the above proposition  also follow from Theorem 1.1. in  \cite{FKR1}; cf., Section 3 in \cite{FKR2a-10}. To be more specific let $R$ be as in   \eqref{defR}, and consider the associate  algebraic Riccati equation
\begin{equation}\label{are}
Q =  A^* Q A + ( C - \Gamma^* Q  A )^*
( R_0 - \Gamma^* Q\Gamma )^{-1} ( C - \Gamma^* QA ).
\end{equation}
An $n\ts n$ matrix $Q$  is called a  \emph{stabilizing solution} to  this algebraic Riccati equation if
\begin{itemize}
\item[\textup{(a)}]  $Q$ is a solution to \eqref{are},
\item[\textup{(b)}] $R_0 - \Gamma^* Q \Gamma$  is strictly positive,
\item[\textup{(c)}] the matrix $A-\ga( R_0 - \Gamma^* Q \Gamma )^{-1}( C - \Gamma^* Q A )$ is stable.
\end{itemize}
 It turns out that if the algebraic Riccati equation \eqref{are} admits a stabilizing  solution $Q$, then this solution is nonnegative  and unique. By the symmetric version of Theorem 1.1 in \cite{FKR1} (see Section 14.7 in \cite{BGKR2} or Sections 10.2 and 10.2 in \cite{FB10}) we know that the following are equivalent:
\begin{itemize}
\item[(i)]  The  values of the function $R$  on $\BT$ are strictly positive.
\item[(ii)] The function $R$ admits an invertible outer spectral factor $\Phi$, i.e., the outer spectral factor $\Phi$ is square and $T_\Phi$ is invertible.
\item[(iii)] The algebraic Riccati equation \eqref{are} admits a stabilizing solution $Q$.
\end{itemize}
Moreover, in this case, the following holds:
\begin{itemize}
\item[(1)] The invertible outer spectral  {factor} $\Phi$ of $R$ is given by
\begin{align}
&\Phi(z) = \Phi(0)+zC_0(I_n - z A)^{-1}\ga, \mbox{where} \label{outRpos}\\
& \hspace{1cm}\Phi(0) =( R_0 - \ga^* Q \ga ) ^{1/2},\nonumber\\
& \hspace{1.35cm}C_0=\Phi(0)( R_0 - \ga^* Q \ga )^{-1} ( C - \ga^* Q A ). \nonumber
\end{align}
and
\begin{equation}\label{invphi}
{\Phi(z)^{-1}=\Phi(0)^{-1}-z \Phi(0)^{-1}C_0 (I-zA^\ts  )^{-1}\ga \Phi(0)^{-1},}
\end{equation}
where $A^\ts=A-\ga( R_0 - \ga^* Q \Gamma )^{-1}( C - \ga^* Q A )$ is stable.
\item[(2)] The  unique stabilizing solution $Q$ to \eqref{are}  is given by
\begin{equation}\label{QTRinv}
Q = W_{obs}^* T_R^{-1} W_{obs}.
\end{equation}
\end{itemize}
Finally, if  in addition   $\{C,A\}$ is observable, then $W_{obs}$ is one to one, and thus, $Q$ is strictly positive.

From Proposition \ref{propinvTPhi} above we know that $Q_\Phi= W_{obs}^*T_R^{-1}W_{obs}$. But then   \eqref{QTRinv} shows that the stabilizing solution $Q$ of the Riccati  equation  \eqref{are} coincides with the observability Gramian $Q_\Phi$. Furthermore, $C_\Phi=C_0$  and the outer
 {spectral} factor  $\Phi$  in Proposition \ref{propinvTPhi} is equal to the   outer  {spectral} factor $\Phi$ given by \eqref{outRpos}. Finally, assuming $\{C,A\}$ is observable and  using the first identity in \eqref{invertout}, we conclude from \eqref{defOm} that $\om_\Phi=Q_\Phi$, and hence \eqref{proj1} tells us that  ${\de} = Q^{-1}_\Phi$.

Applied to the Leech problem \eqref{Leech1} the above results yield the following algorithm to compute a solution when $R$ admits an invertible outer spectral factor. This algorithm can be easily programmed in Matlab.

\begin{procedure}\label{thmpos1}
Let  $G\in {\fR}H^\iy_{m\ts p}$ and $K\in {\fR}H^\iy_{m\ts q}$  be given by the minimal realization \eqref{reprGK1}.   Consider the algebraic Riccati equation \eqref{are} where $R_0$ and $\ga$ are now given by \eqref{defR0} and \eqref{defga}, respectively.
\begin{itemize}
\item[\textup{(i)}]
Assume that there exists a stabilizing solution $Q$ to the algebraic Riccati equation \eqref{are}, or equivalently, the values of $R$ on the unit circle are strictly positive.

\item[\textup{(ii)}]
Then  there exists a stable rational matrix solution $X$ to the Leech problem \eqref{Leech1}   if and only if $Q^{-1} \geq  P_1-P_2$.
Therefore in what follows we assume that $Q^{-1} \geq  P_1-P_2$.
\end{itemize}
If \textup{(i)} and \textup{(ii)} hold, then such a solution $X$ can be computed  by the following steps:
 \begin{itemize}

 \item
 Let $\Phi(0)$ and $C_\Phi$ be the matrices defined by
 \[
 \Phi(0)=(R_0-\ga^*Q \ga)^{1/2} \  \mbox{and}\  C_\Phi=\Phi(0)(R_0-\ga^*Q\ga)^{-1}(C-\ga^*Q A).
 \]
   \item Find  matrices $B_\Theta$ and $D_\Theta$ such that
   \[
   \begin{bmatrix}
  A^*    &   C_\Phi^* \\
  B_\Theta^*    &   D_\Phi^*
\end{bmatrix}
\begin{bmatrix}
 Q    &  0 \\
  0    &  I_r
\end{bmatrix}
\begin{bmatrix}
  A      &   B_\Theta  \\
  C_\Phi   &   D_\Theta
\end{bmatrix}=
\begin{bmatrix}
 Q   &  0 \\
  0    &  I_r
\end{bmatrix}.
   \]

\item
Set $P_3 = Q^{-1}$ and $B_3 = B_\Theta$,  and put
\[
D_3 = \Phi(0)^*D_\Theta + \Gamma^* Q B_\Theta.
\]

\item
Use \textup{Theorem \ref{mainthm2}} to compute $U$ in \eqref{intropartiso1}. Then a
stable rational matrix solution {$X$} to \eqref{Leech1}  is given by $X(z) =  \delta_1 + z  \gamma (I - z \alpha)^{-1} \beta_1$, as in \eqref{introX}.

\item
The function $F(z) = D_3+zC(I_n-A)^{-1}B_3$ satisfies items
\textup{(i)} and \textup{(ii)} of \textup{Theorem \ref{mainthm1}}.

\item
Finally, $\Psi(z) =  \delta_2 + z  \gamma (I - z \alpha)^{-1} \beta_2$ is
a stable rational matrix   function    satisfying $G \Psi = F$  and  $\|\Psi\|_\infty \leq 1$;
see \textup{Theorem \ref{mainthm2}}.
 \end{itemize}
 \end{procedure}

\setcounter{equation}{0}
\section{Example}\label{secexample}
To gain some further insight into the solution obtained by the algorithm described by Procedure \ref{thmpos1}, let us  consider the simple case  when
\begin{equation}
\label{exGK}
G(z) = \frac{1}{\sqrt{2}}\begin{bmatrix}
         1  & 1 \\
       \end{bmatrix} \quad \mbox{and}\quad K(z) = \frac{z}{2}.
\end{equation}
Let $\tau$ be any function in $H^\infty$ satisfying  $\|\tau\|_\infty \leq 1$. One can easily see that
\begin{equation}
\label{exallsol}
X(z) = \frac{z}{2\sqrt{2}}\begin{bmatrix}
        1 \\
        1 \\
      \end{bmatrix} + \frac{ \sqrt{3}}{2\sqrt{2}} \begin{bmatrix}
        1 \\
        -1\\
      \end{bmatrix}\tau(z), \quad |z|<1,
\end{equation}
is a solution to the corresponding Leech problem \eqref{Leech1}. In fact,  all possible solutions are obtained in this way. Note that   the problem has  infinitely many stable rational solutions.

Here we will see that our algorithm yields the particular solution $X$ in \eqref{exallsol} with $\tau$ identically equal to zero, that is,
\begin{equation}
\label{exsol0}
X (z)= \frac{z}{2\sqrt{2}}\begin{bmatrix} 1 \\ 1 \\ \end{bmatrix}.
\end{equation}
(It turns out that  $X$ in \eqref{exsol0} is also  the minimal $H^\infty$ and the minimal $H^2$ solution to $G X =K$.)  For $G$ and $K$ in \eqref{exGK}, a state space realization for $\begin{bmatrix}
         G  & K
       \end{bmatrix}$ is given by \eqref{reprGK1} where
\begin{equation}\label{exdata}
A = 0, \,\,\, C = 1, \,\,\, B_1 = \begin{bmatrix}
         0  & 0 \\
       \end{bmatrix}, \,\,\,  D_1 = \frac{1}{\sqrt{2}}\begin{bmatrix}
         1  & 1 \\
       \end{bmatrix},\,\,\, B_2 = \frac{1}{2},\,\,\, D_2=0.
\end{equation}
With this choice the realization of  $\begin{bmatrix} G  & K  \end{bmatrix}$ is minimal. The controllability Gramians in \eqref{WobsP12} are given by $P_1=0$ and $P_2=1/4$. The function $R$ in \eqref{defRintro} is defined by  $R(z) = G(z)G(1/\bar{z})^* -  K(z)K(1/\bar{z})^* = 3/4$, and $\Gamma =0$. Hence  $\Phi(z)=\sqrt{3}/2$, the subspace $\sX_\Phi = \mathbb{C}$, and $C_\Phi =2/\sqrt{3}$; see \eqref{cstar}. The inner function $\tht$ is given by $\Theta(z) = z$, and $B_3= B_\Theta =\sqrt{3}/2$,  while  $D_\Theta =0$.
 Moreover, $F(z) = \Phi(1/\bar{z})^*\Theta(z) = z\sqrt{3}/2$
 and $D_3= 0$. The controllability Gramian $P_3$ of the pair $\{A,B_3\}$ is given by $P_3=3/4$. Therefore
 $P_3 + P_2 - P_1 = 1$ and $\Upsilon =1$.  According to item (i) in
 Theorem \ref{mainthm1}, the operator $T_G T_G^* - T_K T_K^*$ is
 nonnegative.

 Now we can use \eqref{defU1} to compute a contractive solution to $G X = K$. In this case, the observability Gramian $Y$ for the pair $\{C,A\}$ is given by  $Y=1$,  and $U = V^+V_1$ where
 \[
 V = \begin{bmatrix}
       1 & 0 & 0 \\[.2cm]
       0 & \frac{1}{2} & \frac{1}{2}\\[.2cm]
       0 & \frac{1}{2} & \frac{1}{2} \\[.2cm]
     \end{bmatrix}\mbox{ on } \mathbb{C}^3
     \quad \mbox{and} \quad
 V_1 =  \begin{bmatrix}
       0  & \frac{1}{2} & \frac{\sqrt{3}}{2}\\[.2cm]
       \frac{1}{\sqrt{2}} & 0 & 0\\[.2cm]
      \frac{1}{\sqrt{2}} & 0 & 0\\[.2cm]
     \end{bmatrix}\mbox{ on } \mathbb{C}^3.
 \]
Note that  $V$ is an orthogonal projection, and thus  $V^+ = V$.
 A simple calculation shows that
 \[
 U = V^+V_1 = V V_1=\begin{bmatrix}
       0  & \frac{1}{2} & \frac{\sqrt{3}}{2}\\[.2cm]
       \frac{1}{\sqrt{2}} & 0 & 0\\[.2cm]
       \frac{1}{\sqrt{2}} & 0 & 0 \\[.2cm]
     \end{bmatrix}.
 \]
 Hence
 \[
 \alpha = 0,\quad \gamma = \begin{bmatrix}
        \frac{1}{\sqrt{2}}\\[.2cm]
        \frac{1}{\sqrt{2}}\\[.2cm]
     \end{bmatrix}, \quad \beta_1 = \frac{1}{2},\quad \beta_2 = \frac{\sqrt{3}}{2},
\quad  \delta_1 = \begin{bmatrix}
       0  \\[.2cm]
       0   \\[.2cm]
     \end{bmatrix}, \quad     \delta_2 = \begin{bmatrix}
       0 \\[.2cm]
       0  \\[.2cm]
     \end{bmatrix}.
 \]
 Therefore
\[
X(z) = \delta_1 + z \gamma(I_1 - z \alpha)^{-1} \beta_1 = \frac{z}{2\sqrt{2}}\begin{bmatrix}
       1 \\ 1 \\
     \end{bmatrix}
     \]
is a stable rational matrix solution to the Leech problem \eqref{Leech1} with $G$ and $K$ as in \eqref {exGK}.  Finally,
\[
\Psi(z) = \delta_2 + z \gamma(I_1 - z \alpha)^{-1} \beta_2 = \frac{z \sqrt{3}}{2\sqrt{2}}\begin{bmatrix} 1 \\ 1 \\   \end{bmatrix}
\]
is a contractive stable rational matrix solution to $G \Psi = F$.

\begin{remark} The example presented in this section is of a special kind. Recall  that $R(z)\equiv3/4$, and thus $R$ is strictly positive  on $\BT$.  Hence in constructing
a rational solution to the Leech problem  we could have used   the procedure described in Procedure \ref{thmpos1} to get the solution $X$.  Note that in this case, given the data \eqref{exdata} and the equalities $R_0=3/4$ and $\ga=0$,  the Riccati equation \eqref{are} reduces to $Q=4/3$. The procedure outlined in Procedure \ref{thmpos1} then yields the same solution $X$ as the one obtained above.

Another special feature of the above example is the fact that $P_2-P_1=1/4$ is positive. This implies that for any stable  rational function $F$ such that $F(z)F(\bar{z}^{-1})^*=R(z)=3/4$,  not only the one constructed above,   the operator $T_GT_G^*-T_KT_K^* -T_FT_F^*$ is non-negative. This fact follows from the following variant of \eqref{fundid3b}:
\[
T_G T_G^* - T_K T_K^*-T_FT_F^* =
H_F H_F^*+W_{obs}\big(P_2-P_1\big)W_{obs}^*.
\]

\end{remark}


\end{document}